\newtheorem{theorem}{Theorem}
\newtheorem{definition}[theorem]{Definition}
\newtheorem{lemma}[theorem]{Lemma}
\newtheorem{proposition}[theorem]{Proposition}
\newtheorem{remark}[theorem]{Remark}
\numberwithin{equation}{section}
\begin{document}
%
%
%
%
%
%
%
%
%
\title[Toeplitz operators on generalized Bergman spaces]{Toeplitz operators on generalized\\ 
Bergman spaces}
\author{Kamthorn Chailuek}

\address{%
Department of Mathematics\\
Prince of Songkla University\\
Hatyai, Songkhla, Thailand 90112}

\email{kamthorn.c@psu.ac.th}

\thanks{Supported in part by a grant from Prince of Songkla University}
\author{Brian C. Hall}
\address{%
Department of Mathematics\\
University of Notre Dame\\
255 Hurley Building\\
Notre Dame IN 46556-4618 USA}
\email{bhall@nd.edu}

\thanks{Supported in part by NSF Grant DMS-0555862}

\subjclass{Primary 47B35; Secondary 32A36, 81S10}

\keywords{Bergman space; Toeplitz operator; quantization; holomorphic Sobolev space; Berezin transform}

\date{July 10, 2009}

\begin{abstract}
We consider the weighted Bergman spaces $\mathcal{H}L^{2}(\mathbb{B}^{d}%
,\mu_{\lambda}),$ where we set $d\mu_{\lambda}(z)=c_{\lambda}(1-\left\vert
z\right\vert ^{2})^{\lambda}~d\tau(z),$ with $\tau$ being the hyperbolic volume
measure. These spaces are nonzero if and only if $\lambda>d.$ For
$0<\lambda\leq d,$ spaces with the same formula for the reproducing kernel can
be defined using a Sobolev-type norm. We define Toeplitz operators on these
generalized Bergman spaces and investigate their properties. Specifically, we
describe classes of symbols for which the corresponding Toeplitz operators can
be defined as bounded operators or as a Hilbert--Schmidt operators on the
generalized Bergman spaces.

\end{abstract}

\maketitle

\section{Introduction}

\subsection{Generalized Bergman spaces}

Let $\mathbb{B}^{d}$ denote the (open) unit ball in $\mathbb{C}^{d}$ and let
$\tau$ denote the hyperbolic volume measure on $\mathbb{B}^{d},$ given by%
\begin{equation}
d\tau(z)=(1-|z|^{2})^{-(d+1)}~dz, \label{tau.form}%
\end{equation}
where $dz$ denotes the $2d$-dimensional Lebesgue measure. The measure $\tau$
is natural because it is invariant under all of the automorphisms
(biholomorphic mappings) of $\mathbb{B}^{d}.$ For $\lambda>0,$ let
$\mu_{\lambda}$ denote the measure%
\[
d\mu_{\lambda}(z)=c_{\lambda}(1-|z|^{2})^{\lambda}~d\tau(z),
\]
where $c_{\lambda}$ is a positive constant whose value will be specified
shortly. Finally, let $\mathcal{H}L^{2}(\mathbb{B}^{d},\mu_{\lambda})$ denote
the (weighted) \textbf{Bergman space}, consisting of those holomorphic
functions on $\mathbb{B}^{d}$ that are square-integrable with respect to
$\mu_{\lambda}.$ (Often these are defined using the Lebesgue measure as the
reference measure, but all the formulas look nicer if we use the hyperbolic
volume measure instead.) These spaces carry a projective unitary
representation of the group $SU(d,1).$

If $\lambda>d,$ then the measure $\mu_{\lambda}$ is finite, so that all
bounded holomorphic functions are square-integrable. For $\lambda>d,$ we
choose $c_{\lambda}$ so that $\mu_{\lambda}$ is a probability measure.
Calculation shows that%
\begin{equation}
c_{\lambda}=\frac{\Gamma(\lambda)}{\pi^{d}\Gamma(\lambda-d)},\quad\lambda>d.
\label{clambda}%
\end{equation}
(This differs from the value in Zhu's book \cite{Z} by a factor of $\pi
^{d}/d!,$ because Zhu uses normalized Lebesgue whereas we use un-normalized
Lebesgue measure in (\ref{tau.form}).) On the other hand, if $\lambda\leq d,$
then $\mu_{\lambda}$ is an infinite measure. In this case, it is not hard to
show that there are no nonzero holomorphic functions that are
square-integrable with respect to $\mu_{\lambda}$ (no matter which nonzero
value for $c_{\lambda}$ we choose).

Although the holomorphic $L^{2}$ space with respect to $\mu_{\lambda}$ is
trivial (zero dimensional) when $\lambda\leq d,$ there are indications that
life does not end at $\lambda=d.$ First, the reproducing kernel for
$\mathcal{H}L^{2}(\mathbb{B}^{d},\mu_{\lambda})$ is given by%
\[
K_{\lambda}(z,w)=\frac{1}{(1-z\cdot\bar{w})^{\lambda}}%
\]
for $\lambda>d$. The reproducing kernel is defined by the property that it is
anti-holomorphic in $w$ and satisfies
\[
\int_{\mathbb{B}^{d}}K_{\lambda}(z,w)f(w)~d\mu_{\lambda}(w)=f(z)
\]
for all $f\in\mathcal{H}L^{2}(\mathbb{B}^{d},\mu_{\lambda}).$ Nothing unusual
happens to $K_{\lambda}$ as $\lambda$ approaches $d.$ In fact, $K_{\lambda
}(z,w):=(1-z\cdot\bar{w})^{-\lambda}$ is a \textquotedblleft positive definite
reproducing kernel\textquotedblright\ for all $\lambda>0.$ Thus, it is
possible to define a reproducing kernel Hilbert space for all $\lambda>0$ that
agrees with $\mathcal{H}L^{2}(\mathbb{B}^{d},\mu_{\lambda})$ for $\lambda>d.$

Second, in representation theory, one is sometimes led to consider spaces like
$\mathcal{H}L^{2}(\mathbb{B}^{d},\mu_{\lambda})$ but with $\lambda<d.$
Consider, for example, the much-studied metaplectic representation of the
connected double cover of $SU(1,1)\cong Sp(1,\mathbb{R}).$ This representation
is a direct sum of two irreducible representations, one of which can be
realized in the Bergman space $\mathcal{H}L^{2}(\mathbb{B}^{1},\mu_{3/2})$ and
the other of which can be realized in (a suitably defined version of) the
Bergman space $\mathcal{H}L^{2}(\mathbb{B}^{1},\mu_{1/2}).$ To be precise, we
can say that the second summand of the metaplectic representation is realized
in a Hilbert space of holomorphic functions having $K_{\lambda},$
$\lambda=1/2,$ as its reproducing kernel. See \cite[Sect. 4.6]{F}.

Last, one often wants to consider the infinite-dimensional ($d\rightarrow
\infty$) limit of the spaces $\mathcal{H}L^{2}(\mathbb{B}^{d},\mu_{\lambda}).$
(See, for example, \cite{RT} and \cite{KRT}.) To do this, one wishes to embed
each space $\mathcal{H}L^{2}(\mathbb{B}^{d},\mu_{\lambda})$ isometrically into
a space of functions on $\mathbb{B}^{d+1},$ as functions that are independent
of $z_{n+1}.$ It turns out that if one uses (as we do) hyperbolic volume
measure as the reference measure, then the desired isometric embedding is
achieved by embedding $\mathcal{H}L^{2}(\mathbb{B}^{d},\mu_{\lambda})$ into
$\mathcal{H}L^{2}(\mathbb{B}^{d+1},\mu_{\lambda}).$ That is, if we use the
\textit{same value of} $\lambda$ on $\mathbb{B}^{d+1}$ as on $\mathbb{B}^{d},$
then the norm of a function $f(z_{1},\ldots,z_{d})$ is the same whether we
view it as a function on $\mathbb{B}^{d}$ or as a function on $\mathbb{B}%
^{d+1}$ that is independent of $z_{d+1}.$ (See, for example, Theorem
\ref{bergmansobolev.thm}, where the inner product of $z^{m}$ with $z^{n}$ is
independent of $d$.) If, however, we keep $\lambda$ constant as $d$ tends to
infinity, then we will eventually violate the condition $\lambda>d.$

Although it is possible to describe the Bergman spaces for $\lambda\leq d$ as
reproducing kernel Hilbert spaces, this is not the most convenient description
for calculation. Instead, drawing on several inter-related results in the
literature, we describe these spaces as \textquotedblleft holomorphic Sobolev
spaces,\textquotedblright\ also called Besov spaces. The inner product on
these spaces, which we denote as $H(\mathbb{B}^{d},\lambda),$ is an $L^{2}$
inner product involving both the functions and \textit{derivatives} of the
functions. For $\lambda>d,$ $H(\mathbb{B}^{d},\lambda)$ is identical to
$\mathcal{H}L^{2}(\mathbb{B}^{d},\mu_{\lambda})$ (the same space of functions
with the same inner product), but $H(\mathbb{B}^{d},\lambda)$ is defined for
all $\lambda>0.$

It is worth mentioning that in the borderline case $\lambda=d,$ the space
$H(\mathbb{B}^{d},\lambda)$ can be identified with the Hardy space of
holomorphic functions that are square-integrable over the boundary. To see
this, note that the normalization constant $c_{\lambda}$ tends to zero as
$\lambda$ approaches $d$ from above. Thus, the measure of any compact subset
of $\mathbb{B}^{d}$ tends to zero as $\lambda\rightarrow d^{+},$ meaning that
most of the mass of $\mu_{\lambda}$ is concentrated near the boundary. As
$\lambda\rightarrow d^{+},$ $\mu_{\lambda}$ converges, in the weak-$\ast$
topology on $\overline{\mathbb{B}^{d}},$ to the unique rotationally invariant
probability measure on the boundary. Alternatively, we may observe that the
formula for the inner product of monomials in $H(\mathbb{B}^{d},d)$ (Theorem
\ref{bergmansobolev.thm} with $\lambda=d$) is the same as in the Hardy space.

\subsection{Toeplitz operators}

One important aspect of Bergman spaces is the theory of Toeplitz operators on
them. If $\phi$ is a bounded measurable function, the we can define the
\textbf{Toeplitz operator} $T_{\phi}$ on $\mathcal{H}L^{2}(\mathbb{B}^{d}%
,\mu_{\lambda})$ by $T_{\phi}f=P_{\lambda}(\phi f),$ where $P_{\lambda}$ is
the orthogonal projection from $L^{2}(\mathbb{B}^{d},\mu_{\lambda})$ onto the
holomorphic subspace. That is, $T_{\phi}$ consists of multiplying a
holomorphic function by $\phi,$ followed by projection back into the
holomorphic subspace. Of course, $T_{\phi}$ depends on $\lambda,$ but we
suppress this dependence in the notation. The function $\phi$ is called the
(Toeplitz) \textbf{symbol} of the operator $T_{\phi}.$ The map sending $\phi$
to $T_{\phi}$ is known as the Berezin--Toeplitz quantization map and it (and
various generalizations) have been much studied. See, for example, the early
work of Berezin \cite{B1,B2}, which was put into a general framework in
\cite{Ra,RCG}, along with \cite{KL,BLU,BMS,Co}, to mention just a few works.
The Berezin--Toeplitz quantization may be thought of as a generalization of
the anti-Wick-ordered quantization on $\mathbb{C}^{d}$ (see \cite{H}).

When $\lambda<d,$ the inner product on $H(\mathbb{B}^{d},\lambda)$ is not an
$L^{2}$ inner product, and so the \textquotedblleft multiply and
project\textquotedblright\ definition of $T_{\phi}$ no longer makes sense. Our
strategy is to find alternative formulas for computing $T_{\phi}$ in the case
$\lambda>d,$ with the hope that these formulas will continue to make sense
(for certain classes of symbols $\phi$) for $\lambda\leq d.$ Specifically, we
will identify classes of symbols $\phi$ for which $T_{\phi}$ can be defined as:

\begin{itemize}
\item A bounded operator on $H(\mathbb{B}^{d},\lambda)$ (Section \ref{bddop})

\item A Hilbert--Schmidt operator on $H(\mathbb{B}^{d},\lambda)$ (Section
\ref{hsop}).
\end{itemize}

\noindent We also consider in Section \ref{poly.sec} Toeplitz operators whose
symbols are polynomials in $z$ and $\bar{z}$ and observe some unusual
properties of such operators in the case $\lambda<d.$

\subsection{Acknowledgments}

The authors thank M. Engli\v{s} for pointing out to them several useful
references and B. Driver for useful suggestions regarding the results in
Section \ref{bddop}. This article is an expansion of the Ph.D. thesis of the
first author, written under the supervision of the second author. We also
thank the referee for helpful comments and corrections.

\section{$H(\mathbb{B}^{d},\lambda)$ as a holomorphic Sobolev
space\label{bergmansobolev.sec}}

In this section, we construct a Hilbert space of holomorphic functions on
$\mathbb{B}^{d}$ with reproducing kernel $(1-z\cdot\bar{w})^{-\lambda},$ for
an arbitrary $\lambda>0.$ We denote this space as $H(\mathbb{B}^{d},\lambda).$
The inner product on this space is an $L^{2}$ inner product with respect to
the measure $\mu_{\lambda+2n},$ where $n$ is chosen so that $\lambda+2n>d.$
The inner product, however, involves not only the holomorphic functions but
also their derivatives. That is, $H(\mathbb{B}^{d},\lambda)$ is a sort of
holomorphic Sobolev space (or Besov space) with respect to the measure
$\mu_{\lambda+2n}.$ When $\lambda>d,$ our space is identical to $\mathcal{H}%
L^{2}(\mathbb{B}^{d},\mu_{\lambda})$---not just the same space of functions,
but also the same inner product. When $\lambda\leq d,$ the Hilbert space
$H(\mathbb{B}^{d},\lambda),$ with the associated projective unitary action of
$SU(d,1),$ is sometimes referred to as the analytic continuation (with respect
to $\lambda$) of the holomorphic discrete series.

Results in the same spirit as---and in some cases almost identical to---the
results of this section have appeared in several earlier works, some of which
treat arbitrary bounded symmetric domains and not just the ball in
$\mathbb{C}^{d}.$ For example, in the case of the unit ball in $\mathbb{C}%
^{d},$ Theorem 3.13 of \cite{Ya} would presumably reduce to almost the same
expression as in our Theorem \ref{bergmansobolev.thm}, except that Yan has all
the derivatives on one side, in which case the inner product has to be
interpreted as a limit of integrals over a ball of radius $1-\varepsilon.$
(Compare the formula for $\mathcal{D}_{\lambda}^{k}$ on p. 13 of \cite{Ya} to
the formula for $A$ and $B$ in Theorem \ref{bergmansobolev.thm}.) See also
\cite{Ar,BB,Ka,ZZ,Z1}. Note, however, a number of these references give a
construction that yields, for $\lambda>d,$ the same space of functions as
$\mathcal{H}L^{2}(\mathbb{B}^{d},\mu_{\lambda})$ with a different but
equivalent norm. Such an approach is not sufficient for our needs; we require
the \textit{same inner product} as well as the same space of functions.

Although our results in this section are not really new, we include proofs to
make the paper self-contained and to get the precise form of the results that
we want. The integration-by-parts argument we use also serves to prepare for
our definition of Toeplitz operators on $H(\mathbb{B}^{d},\lambda)$ in Section
\ref{bddop}. We ourselves were introduced to this sort of reasoning by the
treatment in Folland's book \cite{F} of the disk model for the metaplectic
representation. The paper \cite{HL} obtains results in the same spirit as
those of this section, but in the context of a complex semisimple Lie group.

We begin by showing that for $\lambda>d,$ the space $\mathcal{H}%
L^{2}(\mathbb{B}^{d},\mu_{\lambda})$ can be expressed as a subspace of
$\mathcal{H}L^{2}(\mathbb{B}^{d},\mu_{\lambda+2n})$, with a Sobolev-type norm,
for any positive integer $n.$ Let $N$ denote the \textquotedblleft number
operator,\textquotedblright\ defined by%
\[
N=\sum_{j=1}^{d}z_{j}\frac{\partial}{\partial z_{j}}.
\]
This operator satisfies $Nz^{m}=|m|z^{m}$ for all multi-indices $m.$ If $f$ is
holomorphic, then $Nf$ coincides with the \textquotedblleft radial
derivative\textquotedblright\ $\left.  df(rz)/dr\right\vert _{r=1}.$ We use
also the operator $\bar{N}=\sum_{j=1}^{d}\bar{z}_{j}\partial/\partial\bar
{z}_{j}.$

A simple computation shows that
\begin{equation}
(1-|z|^{2})^{\alpha}=\left(  I-\frac{N}{\alpha+1}\right)  (1-|z|^{2}%
)^{\alpha+1}=\left(  I-\frac{\bar{N}}{\alpha+1}\right)  (1-|z|^{2})^{\alpha
+1}. \label{nIdent}%
\end{equation}

We will use (\ref{nIdent}) and the following integration by parts result,
which will also be used in Section \ref{bddop}.

\begin{lemma}
\label{parts.lem}If $\lambda>d$ and $\psi$ is a continuously differentiable
function for which $\psi$ and $N\psi$ are bounded, then%
\begin{align*}
c_{\lambda}\int_{\mathbb{B}^{d}}\psi(z)(1-|z|^{2})^{\lambda-d-1}dz  &
=c_{\lambda+1}\int_{\mathbb{B}^{d}}\left[  \left(  I+\frac{N}{\lambda}\right)
\psi\right]  (z)(1-|z|^{2})^{\lambda-d}~dz\\
&  =c_{\lambda+1}\int_{\mathbb{B}^{d}}\left[  \left(  I+\frac{\bar{N}}%
{\lambda}\right)  \psi\right]  (z)(1-|z|^{2})^{\lambda-d}~dz.
\end{align*}
Here $dz$ is the $2d$-dimensional Lebesgue measure on $\mathbb{B}^{d}.$
\end{lemma}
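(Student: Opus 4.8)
The plan is to prove the two asserted identities by a direct integration-by-parts computation, reducing each to a statement about the radial variable. First I would recall the key identity (\ref{nIdent}): writing $\alpha = \lambda - d - 1$ (which is $>-1$ since $\lambda>d$, so all integrals below converge), we have
\[
(1-|z|^{2})^{\lambda-d-1} = \left( I - \frac{N}{\lambda-d}\right)(1-|z|^{2})^{\lambda-d},
\]
and similarly with $\bar N$ in place of $N$. Since $c_{\lambda}/c_{\lambda+1} = (\lambda-d)/\lambda$ by (\ref{clambda}), the claim $c_{\lambda}\int \psi\,(1-|z|^{2})^{\lambda-d-1}\,dz = c_{\lambda+1}\int [(I+\tfrac{N}{\lambda})\psi]\,(1-|z|^{2})^{\lambda-d}\,dz$ will follow once I show
\[
\frac{\lambda-d}{\lambda}\int_{\mathbb{B}^{d}} \psi \left(I - \frac{N}{\lambda-d}\right)(1-|z|^{2})^{\lambda-d}\,dz
= \int_{\mathbb{B}^{d}} \left[\left(I+\frac{N}{\lambda}\right)\psi\right](1-|z|^{2})^{\lambda-d}\,dz,
\]
i.e.\ that $\int \psi\, N g\,dz = -\int (N\psi)\,g\,dz - (\text{something})\int \psi g\,dz$ for $g = (1-|z|^2)^{\lambda-d}$; the precise constant will come out of the divergence computation. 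So the heart of the matter is an integration-by-parts formula for the operator $N = \sum_j z_j \partial/\partial z_j$ against Lebesgue measure on $\mathbb{B}^{d}$.

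For that step I would write $N = \tfrac{1}{2}(R - i\Theta)$ where $R = \sum_j (x_j\partial_{x_j} + y_j \partial_{y_j})$ is the (real) Euler/radial vector field in $\mathbb{R}^{2d}$ and $\Theta$ is the generator of the rotation $z\mapsto e^{i\theta}z$; equivalently, just work directly with $N$ as a complex vector field. The divergence of the real vector field underlying $z_j\partial_{z_j}$ (viewed in $\mathbb{R}^{2d}$) is the constant $1$ (each of the two real coordinates contributes $1/2$), so $\sum_j z_j\partial_{z_j}$ has divergence $d$, and hence for any compactly-supported $C^1$ function $h$ on $\mathbb{B}^d$ one has $\int_{\mathbb{B}^d} Nh\,dz = -d\int_{\mathbb{B}^d} h\,dz$ — equivalently $\int (N\psi) g + \int \psi (Ng) = -d\int \psi g$ for nice $\psi, g$. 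Applying this with $g=(1-|z|^2)^{\lambda-d}$ (so that $g$ and $\nabla g$ vanish at $\partial\mathbb{B}^d$ because $\lambda - d>0$, legitimizing the absence of boundary terms — this is where the hypothesis $\lambda>d$ and the boundedness of $\psi$, $N\psi$ are used) and combining with (\ref{nIdent}) and the ratio $c_\lambda/c_{\lambda+1}$ gives the first equality after elementary algebra; the $\bar N$ version is identical since $\bar N$ is the complex conjugate vector field and the same divergence computation applies.

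The main obstacle — really the only subtle point — is justifying that there are no boundary contributions: one must check that $\psi(z)(1-|z|^2)^{\lambda-d}$ and its relevant first derivatives are integrable and that the flux through spheres $|z|=r$ tends to $0$ as $r\to 1^-$. Since $\psi$ and $N\psi$ are bounded and $(1-|z|^2)^{\lambda-d}\to 0$ with $\lambda-d>0$, this is routine: I would apply the divergence theorem on the ball $|z|<r$, bound the boundary term by $(\sup|\psi|)(1-r^2)^{\lambda-d}\cdot \mathrm{Area}(|z|=r) = O((1-r^2)^{\lambda-d})\to 0$, and pass to the limit using dominated convergence on the interior integrals (the integrand $\psi\cdot(1-|z|^2)^{\lambda-d-1}$ is dominated by $\mathrm{const}\cdot(1-|z|^2)^{\lambda-d-1}\in L^1(\mathbb{B}^d)$ because $\lambda-d-1>-1$). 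I would present this limiting argument explicitly and relegate the algebraic bookkeeping with the constants to a one-line remark.
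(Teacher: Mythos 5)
Your proposal is correct and follows essentially the same route as the paper: apply (\ref{nIdent}), integrate by parts on the ball of radius $r<1$ so that $N$ acting on $(1-|z|^{2})^{\lambda-d}$ becomes $-(dI+N)$ acting on $\psi$, observe that the boundary term is $O((1-r^{2})^{\lambda-d})\to 0$ because $\lambda>d$, and combine with $c_{\lambda}/c_{\lambda+1}=(\lambda-d)/\lambda$. The only difference is cosmetic: you package the integration by parts as a divergence-theorem computation (using $\operatorname{div}N=d$) where the paper does it coordinate-by-coordinate in $x_{j}$ and $y_{j}$.
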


\begin{proof}
We start by applying (\ref{nIdent}) and then think of the integral over
$\mathbb{B}^{d}$ as the limit as $r$ approaches 1 of the integral over a ball
of radius $r<1.$ On the ball of radius $r,$ we write out $\partial/\partial
z_{j}$ in terms of $\partial/\partial x_{j}$ and $\partial/\partial y_{j}.$
For, say, the $\partial/\partial x_{j}$ term we express the integral as a
one-dimensional integral with respect to $x_{j}$ (with limits of integration
depending on the other variables) followed by an integral with respect to the
other variables. We then use ordinary integration by parts in the $x_{j}$
integral, and similarly for the $\partial/\partial y_{j}$ term.

The integration by parts will yield a boundary term involving $z_{j}%
\psi(z)(1-|z|^{2})^{\lambda-d}$; this boundary term will vanish as $r$ tends
to 1, because we assume $\lambda>d.$ In the nonboundary term, the operator $N$
applied to $(1-|z|^{2})^{\lambda-d}$ will turn into the operator $-\sum
_{j=1}^{d}\partial/\partial z_{j}\circ z_{j}=-(dI+N)$ applied to $\psi.$
Computing from (\ref{clambda}) that $c_{\lambda}/c_{\lambda+1}=(\lambda
-d)/\lambda$, we may simplify and let $r$ tend to 1 to obtain the desired
result involving $N.$ The same reasoning gives the result involving $\bar{N}$
as well.
\end{proof}

We now state the key result, obtained from (\ref{nIdent}) and Lemma
\ref{parts.lem}, relating the inner product in $\mathcal{H}L^{2}%
(\mathbb{B}^{d},\mu_{\lambda})$ to the inner product in $\mathcal{H}%
L^{2}(\mathbb{B}^{d},\mu_{\lambda+1})$ (compare \cite[p. 215]{F} in the case
$d=1$).

\begin{proposition}
\label{shift1.prop}Suppose that $\lambda>d$ and $f$ and $g$ are holomorphic
functions on $\mathbb{B}^{d}$ for which $f,$ $g,$ $Nf,$ and $Ng$ are all
bounded. Then%
\begin{equation}
\left\langle f,g\right\rangle _{L^{2}(\mathbb{B}^{d},\mu_{\lambda}%
)}=\left\langle f,\left(  I+\frac{N}{\lambda}\right)  g\right\rangle
_{L^{2}(\mathbb{B}^{d},\mu_{\lambda+1})}=\left\langle \left(  I+\frac
{N}{\lambda}\right)  f,g\right\rangle _{L^{2}(\mathbb{B}^{d},\mu_{\lambda+1}%
)}. \label{shift1}%
\end{equation}

\end{proposition}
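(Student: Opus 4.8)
The plan is to deduce Proposition \ref{shift1.prop} directly from Lemma \ref{parts.lem} by choosing the test function $\psi$ appropriately. Since $f$ and $g$ are holomorphic, the integrand defining $\langle f,g\rangle_{L^2(\mathbb{B}^d,\mu_\lambda)}$ is $c_\lambda f(z)\overline{g(z)}(1-|z|^2)^{\lambda}\,d\tau(z)$, which after writing out $d\tau$ via \eqref{tau.form} becomes $c_\lambda f(z)\overline{g(z)}(1-|z|^2)^{\lambda-d-1}\,dz$. So I would set $\psi(z)=f(z)\overline{g(z)}$ and verify the hypotheses of Lemma \ref{parts.lem}: $\psi$ is continuously differentiable (indeed smooth), $\psi$ is bounded because $f$ and $g$ are, and $N\psi = (Nf)\overline{g} + f\,\overline{\bar N g}$... wait, one must be careful here: $N$ acts holomorphically, so $N(f\bar g) = (Nf)\bar g + f\,(N\bar g)$, and $N\bar g = \sum_j z_j \partial_{\bar z_j}\bar g = 0$ since $g$ is holomorphic, hence $N\psi = (Nf)\bar g$, which is bounded by hypothesis. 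Thus Lemma \ref{parts.lem} applies.

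Next I would apply the first equality in Lemma \ref{parts.lem} with this $\psi$ to get
\[
c_\lambda\int_{\mathbb{B}^d} f(z)\overline{g(z)}(1-|z|^2)^{\lambda-d-1}\,dz
= c_{\lambda+1}\int_{\mathbb{B}^d}\Bigl[\Bigl(I+\tfrac{N}{\lambda}\Bigr)(f\bar g)\Bigr](z)(1-|z|^2)^{\lambda-d}\,dz.
\]
Using $N(f\bar g) = (Nf)\bar g$ as above, the right-hand integrand equals $\bigl[(I+\tfrac{N}{\lambda})f\bigr](z)\,\overline{g(z)}\,(1-|z|^2)^{\lambda-d}$. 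Recognizing $(1-|z|^2)^{\lambda-d}\,dz = (1-|z|^2)^{\lambda+1}\,d\tau(z)$ and that $c_{\lambda+1}(1-|z|^2)^{\lambda+1}\,d\tau(z) = d\mu_{\lambda+1}(z)$, the right-hand side is exactly $\langle (I+\tfrac{N}{\lambda})f,\,g\rangle_{L^2(\mathbb{B}^d,\mu_{\lambda+1})}$, giving the third expression in \eqref{shift1}. For the middle expression I would instead use the second equality in Lemma \ref{parts.lem}, involving $\bar N$; here $\bar N(f\bar g) = f\,(\bar N \bar g) = f\,\overline{Ng}$ since $\bar N f = 0$ for holomorphic $f$, and $\overline{(I + \tfrac{N}{\lambda})g}$ appears, which is precisely what is needed for $\langle f,(I+\tfrac{N}{\lambda})g\rangle_{L^2(\mathbb{B}^d,\mu_{\lambda+1})}$ (noting $\lambda$ is real so there is no conjugation issue on the factor $1/\lambda$).

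The main obstacle, such as it is, is bookkeeping rather than anything deep: one must keep straight which of $N$, $\bar N$ annihilates which of $f$, $\bar g$, and track the conjugates so that the operator lands on the correct factor to reproduce an inner product (recall the convention that $\langle\cdot,\cdot\rangle$ is conjugate-linear in the second slot, so $\langle f, (I+N/\lambda)g\rangle = c_{\lambda+1}\int f\,\overline{(I+N/\lambda)g}\,(1-|z|^2)^{\lambda+1}\,d\tau$). There is also a small point worth stating explicitly: the hypothesis that $f,g,Nf,Ng$ are bounded is exactly what makes $\psi = f\bar g$ satisfy the hypotheses of Lemma \ref{parts.lem} for \emph{both} the $N$ and $\bar N$ forms, since $N\psi = (Nf)\bar g$ and $\bar N\psi = f\,\overline{Ng}$ are then both bounded. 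No convergence or approximation argument beyond what is already inside the proof of Lemma \ref{parts.lem} is needed.
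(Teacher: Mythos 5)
Your proof is correct and takes essentially the same route as the paper: apply Lemma \ref{parts.lem} to the product of $f$ and the conjugate of $g$, using holomorphy so that $N$ acts only on the holomorphic factor and $\bar N$ only on the anti-holomorphic one. The only discrepancy is one of convention --- the paper takes the inner product conjugate-linear in the \emph{first} slot and hence sets $\psi=\bar f g$, so the roles of the $N$- and $\bar N$-identities in producing the two equalities of (\ref{shift1}) are swapped relative to your write-up, which changes nothing of substance.
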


\begin{proof}
Recalling the formula (\ref{tau.form}) for the measure $\tau,$ we apply Lemma
\ref{parts.lem} with $\psi(z)=\overline{f(z)}g(z)$ with $f$ and $g$
holomorphic. Observing that $N(\bar{f}g)=\bar{f}Ng$ gives the first equality
and observing that $\bar{N}(\bar{f}g)=\overline{(Nf)}g$ gives the second equality.
\end{proof}

Now, a general function in $\mathcal{H}L^{2}(\mathbb{B}^{d},\mu_{\lambda})$ is
not bounded. Indeed, the pointwise bounds on elements of $\mathcal{H}%
L^{2}(\mathbb{B}^{d},\mu_{\lambda}),$ coming from the reproducing kernel, are
not sufficient to give a direct proof of the vanishing of the boundary terms
in the integration by parts in Proposition \ref{shift1.prop}. Nevertheless,
(\ref{shift1}) does hold for all $f$ and $g$ in $\mathcal{H}L^{2}%
(\mathbb{B}^{d},\mu_{\lambda}),$ \textit{provided} that one interprets the
inner product as the limit as $r$ approaches 1 of integration over a ball of
radius $r.$ (See \cite[p. 215]{F} or \cite[Thm. 3.13]{Ya}.) We are going to
iterate (\ref{shift1}) to obtain an expression for the inner product on
$\mathcal{H}L^{2}(\mathbb{B}^{d},\mu_{\lambda})$ involving equal numbers of
derivatives on $f$ and $g.$ This leads to the following result.

\begin{theorem}
\label{shift2n.thm}Fix $\lambda>d$ and a non-negative integer $n.$ Then a
holomorphic function $f$ on $\mathbb{B}^{d}$ belongs to $\mathcal{H}%
L^{2}(\mathbb{B}^{d},\mu_{\lambda})$ if and only if $N^{l}f$ belongs to
$\mathcal{H}L^{2}(\mathbb{B}^{d},\mu_{\lambda+2n})$ for $0\leq l\leq n.$
Furthermore,%
\begin{equation}
\left\langle f,g\right\rangle _{\mathcal{H}L^{2}(\mathbb{B}^{d},\mu_{\lambda
})}=\left\langle Af,Bg\right\rangle _{\mathcal{H}L^{2}(\mathbb{B}^{d}%
,\mu_{\lambda+2n})} \label{shift2n}%
\end{equation}
for all $f,g\in\mathcal{H}L^{2}(\mathbb{B}^{d},\mu_{\lambda}),$ where%
\begin{align*}
A  &  =\left(  I+\frac{N}{\lambda+n}\right)  \left(  I+\frac{N}{\lambda
+n+1}\right)  \cdots\left(  I+\frac{N}{\lambda+2n-1}\right) \\
B  &  =\left(  I+\frac{N}{\lambda}\right)  \left(  I+\frac{N}{\lambda
+1}\right)  \cdots\left(  I+\frac{N}{\lambda+n-1}\right)  .
\end{align*}

\end{theorem}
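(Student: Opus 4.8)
The plan is to iterate Proposition \ref{shift1.prop} exactly $n$ times, each time trading one power of $(1-|z|^2)$ in the measure for one extra factor of $(I + N/(\text{something}))$ distributed between $f$ and $g$. Concretely, starting from $\langle f,g\rangle_{\mu_\lambda}$, I would first apply the first equality of \eqref{shift1} to move a factor $(I+N/\lambda)$ onto $g$, obtaining $\langle f, (I+N/\lambda) g\rangle_{\mu_{\lambda+1}}$; then apply the \emph{second} equality of \eqref{shift1} at level $\lambda+1$ to move a factor $(I + N/(\lambda+1))$ onto $f$, and so on, alternating so that after $2n$ steps exactly $n$ factors have accumulated on $f$ and $n$ on $g$. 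Keeping careful track of the indices, the factors landing on $g$ are $(I+N/\lambda),\dots,(I+N/(\lambda+n-1))$, i.e.\ the operator $B$, and the factors landing on $f$ are $(I+N/(\lambda+n)),\dots,(I+N/(\lambda+2n-1))$, i.e.\ the operator $A$, which is precisely \eqref{shift2n}. Since all the operators $I + N/c$ commute (they are all functions of $N$), the order in which one distributes the factors does not matter, only the bookkeeping of which index goes to which side; I would present this as a clean induction on $n$.

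For the rigor, the subtlety is that \eqref{shift1} as stated in Proposition \ref{shift1.prop} requires $f,g,Nf,Ng$ bounded, whereas a general element of $\mathcal{H}L^2(\mathbb{B}^d,\mu_\lambda)$ need not be bounded. As the paragraph preceding the theorem already notes, \eqref{shift1} nonetheless holds for all $f,g \in \mathcal{H}L^2(\mathbb{B}^d,\mu_\lambda)$ provided the inner products are read as limits of integrals over $\{|z|<r\}$ as $r\to 1$ (citing \cite[p.~215]{F} or \cite[Thm.~3.13]{Ya}); I would invoke this and carry out the iteration at the level of these truncated integrals, taking the limit $r\to 1$ only at the end. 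Alternatively — and this is probably the cleaner route — I would first prove the identity on the dense subspace of polynomials (or functions holomorphic in a neighborhood of $\overline{\mathbb{B}^d}$), where Proposition \ref{shift1.prop} applies verbatim and no boundary-term issue arises, and then extend to all of $\mathcal{H}L^2(\mathbb{B}^d,\mu_\lambda)$ by a density/continuity argument once the norm equivalence (the "if and only if" part) is in hand.

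That norm-equivalence statement — $f \in \mathcal{H}L^2(\mathbb{B}^d,\mu_\lambda)$ iff $N^l f \in \mathcal{H}L^2(\mathbb{B}^d,\mu_{\lambda+2n})$ for $0\le l\le n$ — is what I expect to be the main obstacle, and it is really the crux of the theorem. The forward direction follows from \eqref{shift2n} applied with $f=g$ together with the observation that each $I + N/c$ is a bounded-below, invertible operator on the span of monomials (its eigenvalue on $z^m$ is $1 + |m|/c > 0$ and bounded away from $0$), so $\|f\|_{\mu_\lambda}$ controls $\|N^l f\|_{\mu_{\lambda+2n}}$ for each $l \le n$; conversely, writing $B^{-1}$ as a polynomial-coefficient combination — more precisely, expanding the product $AB$ and inverting — one bounds $\|f\|_{\mu_\lambda}^2 = \langle Af, Bf\rangle_{\mu_{\lambda+2n}}$ by a finite linear combination of the $\|N^l f\|_{\mu_{\lambda+2n}}$, $0\le l\le n$, using Cauchy--Schwarz on each term. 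The point requiring care is that $A$ and $B$ are polynomials of degree exactly $n$ in $N$ with positive leading coefficients, so $AB$ is a degree-$2n$ polynomial in $N$ whose action on $z^m$ is a positive scalar comparable to $(1+|m|)^{2n}$; matching this against $\sum_{l\le n}\|N^l f\|^2$ — whose $z^m$ component scales like $(1+|m|)^{2l}$ summed, dominated by $(1+|m|)^{2n}$ — gives the two-sided bound coefficient-wise on the monomial expansion, and hence the equivalence of norms. Once this is established, \eqref{shift2n} extends from polynomials to all of $\mathcal{H}L^2(\mathbb{B}^d,\mu_\lambda)$ by continuity of both sides in the $\mathcal{H}L^2(\mathbb{B}^d,\mu_\lambda)$-norm.
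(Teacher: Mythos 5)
Your proposal is correct, and the first half (establish \eqref{shift2n} for polynomials by iterating Proposition \ref{shift1.prop}, then extend by density) is exactly what the paper does. One bookkeeping wrinkle: if you literally alternate the two equalities of \eqref{shift1}, the factors that accumulate on $g$ are $(I+N/\lambda),(I+N/(\lambda+2)),\ldots$ and those on $f$ are $(I+N/(\lambda+1)),(I+N/(\lambda+3)),\ldots$, not the stated $A$ and $B$; to get the theorem's grouping verbatim you should apply the first equality $n$ times in a row and then the second equality $n$ times. This is harmless, since all these operators are diagonal on the orthogonal basis of monomials and only the product of the eigenvalues enters the pairing, so the two splittings give equal inner products --- but it is worth saying explicitly. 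Where you genuinely diverge from the paper is in the ``if and only if'' step. The paper argues softly: it applies \eqref{shift2n} to differences of Taylor partial sums and uses the positivity of $N$ to conclude that each term on the right-hand side separately tends to zero, making $N^l f_j$ (respectively $f_j$) Cauchy in the appropriate space; no explicit eigenvalue asymptotics are needed. You instead prove a quantitative two-sided comparison on the monomial expansion, matching the eigenvalue $\prod_{j=0}^{2n-1}(1+|m|/(\lambda+j))\asymp(1+|m|)^{2n}$ of $AB$ against $\sum_{l\le n}|m|^{2l}$. Your route is more explicit and immediately yields the equivalence of the norm $\Vert f\Vert_{\mu_\lambda}$ with $\bigl(\sum_{l\le n}\Vert N^l f\Vert_{\mu_{\lambda+2n}}^2\bigr)^{1/2}$, at the cost of invoking (as the paper also implicitly does) the standard fact that for holomorphic $f$ the $\mu_\lambda$-norm equals the $\ell^2$-sum of Taylor coefficients weighted by monomial norms, with both sides possibly infinite; the paper's Cauchy-sequence argument avoids writing any asymptotics but is less transparent about why the equivalence holds. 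Either way the final continuity argument extending \eqref{shift2n} to all of $\mathcal{H}L^{2}(\mathbb{B}^{d},\mu_{\lambda})$ goes through as you describe.
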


Let us make a few remarks about this result before turning to the proof. Let
$\sigma=\lambda+2n.$ It is not hard to see that $N^{k}f$ belongs to
$\mathcal{H}L^{2}(\mathbb{B}^{d},\mu_{\sigma})$ for $0\leq k\leq n$ if and
only if all the partial derivatives of $f$ up to order $n$ belong to
$\mathcal{H}L^{2}(\mathbb{B}^{d},\mu_{\mu}),$ so we may describe this
condition as \textquotedblleft$f$ has $n$ derivatives in $\mathcal{H}%
L^{2}(\mathbb{B}^{d},\mu_{\sigma}).$\textquotedblright\ This condition then
implies that $f$ belongs to $\mathcal{H}L^{2}(\mathbb{B}^{d},\mu_{\sigma
-2n}),$ which in turn means that $f(z)/(1-\left\vert z\right\vert ^{2})^{n}$
belongs to $L^{2}(\mathbb{B}^{d},\mu_{\sigma}).$ Since $1/(1-\left\vert
z\right\vert ^{2})^{n}$ blows up at the boundary of $\mathbb{B}^{d},$ saying
that $f(z)/(1-\left\vert z\right\vert ^{2})^{n}$ belongs to $L^{2}%
(\mathbb{B}^{d},\mu_{\sigma})$ says that $f(z)$ has better behavior at the
boundary than a typical element of $\mathcal{H}L^{2}(\mathbb{B}^{d}%
,\mu_{\sigma}).$ We may summarize this discussion by saying that each
derivative that $f\in\mathcal{H}L^{2}(\mathbb{B}^{d},\mu_{\sigma})$ has in
$\mathcal{H}L^{2}(\mathbb{B}^{d},\mu_{\sigma})$ results, roughly speaking, in
an improvement by a factor of $(1-\left\vert z\right\vert ^{2})$ in the
behavior of $f$ near the boundary.

This improvement is also reflected in the pointwise bounds on $f$ coming from
the reproducing kernel. If $f$ has $n$ derivatives in $\mathcal{H}%
L^{2}(\mathbb{B}^{d},\mu_{\sigma}),$ then $f$ belongs to $\mathcal{H}%
L^{2}(\mathbb{B}^{d},\mu_{\sigma-2n}),$ which means that $f$ satisfies the
pointwise bounds
\begin{align}
\left\vert f(z)\right\vert  &  \leq\left\Vert f\right\Vert _{L^{2}%
(\mathbb{B}^{d},\mu_{\sigma-2n})}\left(  K_{\sigma-2n}(z,z)\right)
^{1/2}\nonumber\\
&  =\left\Vert f\right\Vert _{L^{2}(\mathbb{B}^{d},\mu_{\sigma-2n})}\left(
\frac{1}{1-\left\vert z\right\vert ^{2}}\right)  ^{\frac{\sigma}{2}-n}.
\label{improve.bounds}%
\end{align}
These bounds are better by a factor of $(1-\left\vert z\right\vert ^{2})^{n}$
than the bounds on a typical element of $\mathcal{H}L^{2}(\mathbb{B}^{d}%
,\mu_{\sigma}).$ See also \cite{HL} for another setting in which the existence
of derivatives in a holomorphic $L^{2}$ space can be related in a precise way
to improved pointwise behavior of the functions.

The results of the two previous paragraphs were derived under the assumption
that $\lambda=\sigma-2n>d.$ However, Theorem \ref{bergmansobolev.thm} will
show that (\ref{improve.bounds}) still holds under the assumption
$\lambda=\sigma-2n>0.$

\begin{proof}
If $f$ and $g$ are polynomials, then (\ref{shift2n}) follows from iteration of
Proposition \ref{shift1.prop}. Note that $N$ is a non-negative operator on
polynomials, because the monomials form an orthogonal basis of eigenvectors
with non-negative eigenvalues. It is well known and easily verified that for
any $f$ in $\mathcal{H}L^{2}(\mathbb{B}^{d},\mu_{\lambda}),$ the partial sums
of the Taylor series of $f$ converge to $f$ in norm. We can therefore choose
polynomials $f_{j}$ converging in norm to $f$. If we apply (\ref{shift2n})
with $f=g=(f_{j}-f_{k})$ and expand out the expressions for $A$ and $B,$ then
the positivity of $N$ will force each of the terms on the right-hand side to
tend to zero. In particular, $N^{l}f_{j}$ is a Cauchy sequences in
$\mathcal{H}L^{2}(\mathbb{B}^{d},\mu_{\lambda+2n}),$ for all $0\leq l\leq n.$
It is easily seen that the limit of this sequence is $N^{l}f$; for holomorphic
functions, $L^{2}$ convergence implies locally uniform convergence of the
derivatives to the corresponding derivatives of the limit function. This shows
that $N^{l}f$ is in $\mathcal{H}L^{2}(\mathbb{B}^{d},\mu_{\lambda+2n}).$ For
any $f,g\in\mathcal{H}L^{2}(\mathbb{B}^{d},\mu_{\lambda}),$ choose sequences
$f_{j}$ and $g_{k}$ of polynomials converging to $f,g.$ Since $N^{l}f_{j}$ and
$N^{l}g_{j}$ converge to $N^{l}f$ and $N^{l}g,$ respectively, plugging $f_{j}$
and $g_{j}$ into (\ref{shift2n}) and taking a limit gives (\ref{shift2n}) in general.

In the other direction, suppose that $N^{l}f$ belongs to $\mathcal{H}%
L^{2}(\mathbb{B}^{d},\mu_{\lambda+2n})$ for all $0\leq l\leq n.$ Let $f_{j}$
denote the $j$th partial sum of the Taylor series of $f$. Then since
$Nz^{m}=|m|z^{m}$ for all multi-indices $m,$ the functions $N^{l}f_{j}$ form
the partial sums of a Taylor series converging to $N^{l}f_{j},$ and so these
must be the partial sums of \textit{the} Taylor series of $N^{l}f.$ Thus, for
each $l,$ we have that $N^{l}f_{j}$ converges to $N^{l}f$ in $\mathcal{H}%
L^{2}(\mathbb{B}^{d},\mu_{\lambda+2n}).$ If we then apply (\ref{shift2n}) with
$f=g=f_{j}-f_{k},$ convergence of each $N^{l}f_{j}$ implies that all the terms
on the right-hand side tend to zero. We conclude that $f_{j}$ is a Cauchy
sequence in $\mathcal{H}L^{2}(\mathbb{B}^{d},\mu_{\lambda}),$ which converges
to some $\hat{f}.$ But $L^{2}$ convergence of holomorphic functions implies
pointwise convergence, so the limit in $\mathcal{H}L^{2}(\mathbb{B}^{d}%
,\mu_{\lambda})$ (i.e., $\hat{f}$) coincides with the limit in $\mathcal{H}%
L^{2}(\mathbb{B}^{d},\mu_{\lambda+2n})$ (i.e., $f$). This shows that $f$ is in
$\mathcal{H}L^{2}(\mathbb{B}^{d},\mu_{\lambda}).$
\end{proof}

Now, when $\lambda\leq d,$ Proposition \ref{shift1} no longer holds. This is
because the boundary terms, which involve $(1-|z|^{2})^{\lambda-d},$ no longer
vanish. This failure of equality is actually a \textit{good} thing, because if
we take $f=g$, then
\[
c_{\lambda}\int_{\mathbb{B}^{d}}\left\vert f\left(  z\right)  \right\vert
^{2}(1-|z|^{2})^{\lambda}~d\tau(z)=+\infty
\]
for all nonzero holomorphic functions, no matter what positive value we assign
to $c_{\lambda}.$ (Recall that when $\lambda>d,$ $c_{\lambda}$ is chosen to
make $\mu_{\lambda}$ a probability measure, but this prescription does not
make sense for $\lambda\leq d.$) Although the left-hand side of (\ref{shift1})
is infinite when $f=g$ and $\lambda\leq d,$ the right-hand side is finite if
$\lambda+1>d$ and, say, $f$ is a polynomial.

More generally, for any $\lambda\leq d,$ we can choose $n$ big enough that
$\lambda+2n>d.$ We then take the right-hand side of (\ref{shift2n}) as a definition.

\begin{theorem}
\label{bergmansobolev.thm}For all $\lambda>0,$ choose a non-negative integer
$n$ so that $\lambda+2n>d$ and define%
\[
H(\mathbb{B}^{d},\lambda)=\left\{  f\in\mathcal{H}(\mathbb{B}^{d})\left\vert
N^{k}f\in\mathcal{H}L^{2}(\mathbb{B}^{d},\mu_{\lambda+2n}),~0\leq k\leq
n\right.  \right\}  .
\]
Then the formula%
\[
\left\langle f,g\right\rangle _{\lambda}=\left\langle Af,Bg\right\rangle
_{\mathcal{H}L^{2}(\mathbb{B}^{d},\mu_{\lambda+2n})}%
\]
where%
\begin{align*}
A  &  =\left(  I+\frac{N}{\lambda+n}\right)  \left(  I+\frac{N}{\lambda
+n+1}\right)  \cdots\left(  I+\frac{N}{\lambda+2n-1}\right) \\
B  &  =\left(  I+\frac{N}{\lambda}\right)  \left(  I+\frac{N}{\lambda
+1}\right)  \cdots\left(  I+\frac{N}{\lambda+n-1}\right)
\end{align*}
defines an inner product on $H(\mathbb{B}^{d},\lambda)$ and $H(\mathbb{B}%
^{d},\lambda)$ is complete with respect to this inner product.

The monomials $z^{m}$ form an orthogonal basis for $H(\mathbb{B}^{d},\lambda)$
and for all multi-indices $l$ and $m$ we have%
\[
\left\langle z^{l},z^{m}\right\rangle _{\lambda}=\delta_{l,m}\frac
{m!\Gamma(\lambda)}{\Gamma(\lambda+|m|)}.
\]
Furthermore, $H(\mathbb{B}^{d},\lambda)$ has a reproducing kernel given by%
\[
K_{\lambda}(z,w)=\frac{1}{(1-z\cdot\bar{w})^{\lambda}}.
\]

\end{theorem}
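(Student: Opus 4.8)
The plan is to reduce every assertion to the ordinary weighted Bergman space $\mathcal{H}L^{2}(\mathbb{B}^{d},\mu_{\sigma})$ with $\sigma:=\lambda+2n>d$, whose structure is classical: the monomials form an orthogonal basis with $\langle z^{l},z^{m}\rangle_{\mathcal{H}L^{2}(\mathbb{B}^{d},\mu_{\sigma})}=\delta_{l,m}\,m!\,\Gamma(\sigma)/\Gamma(\sigma+|m|)$, and a holomorphic $h=\sum_{m}c_{m}z^{m}$ lies in this space precisely when $\sum_{m}|c_{m}|^{2}m!\,\Gamma(\sigma)/\Gamma(\sigma+|m|)<\infty$, in which case that sum is $\Vert h\Vert^{2}$. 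Since $Nz^{m}=|m|z^{m}$, the operators $A$ and $B$ are simultaneously diagonalized by the monomials, $Az^{m}=a_{m}z^{m}$ and $Bz^{m}=b_{m}z^{m}$ with
\[
a_{m}=\prod_{j=n}^{2n-1}\frac{\lambda+j+|m|}{\lambda+j},\qquad b_{m}=\prod_{j=0}^{n-1}\frac{\lambda+j+|m|}{\lambda+j},
\]
so $a_{m},b_{m}>0$, and telescoping the Gamma factors gives $a_{m}b_{m}=\Gamma(\sigma+|m|)\Gamma(\lambda)/(\Gamma(\lambda+|m|)\Gamma(\sigma))$. Multiplying by the $\sigma$-norm of $z^{m}$ yields the key identity
\[
a_{m}b_{m}\,\langle z^{m},z^{m}\rangle_{\mathcal{H}L^{2}(\mathbb{B}^{d},\mu_{\sigma})}=\frac{m!\,\Gamma(\lambda)}{\Gamma(\lambda+|m|)}=:w_{m},
\]
which in particular does not depend on $n$.

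First I would note that $\langle f,g\rangle_{\lambda}$ is well defined on $H(\mathbb{B}^{d},\lambda)$: $A$ and $B$ are polynomials in $N$ of degree at most $n$, so $Af$ and $Bg$ are linear combinations of $N^{k}f$ and $N^{k}g$ for $0\le k\le n$, all of which lie in $\mathcal{H}L^{2}(\mathbb{B}^{d},\mu_{\sigma})$ by the definition of $H(\mathbb{B}^{d},\lambda)$. Next I would evaluate this pairing on Taylor series. For $f=\sum_{m}c_{m}z^{m}\in H(\mathbb{B}^{d},\lambda)$, its total-degree Taylor partial sum $f_{j}$ has the property that $N^{k}f_{j}$ is the $j$th Taylor partial sum of $N^{k}f\in\mathcal{H}L^{2}(\mathbb{B}^{d},\mu_{\sigma})$ (as in the proof of Theorem \ref{shift2n.thm}), and partial sums of the Taylor series of a Bergman function converge in norm; hence $N^{k}f_{j}\to N^{k}f$ and therefore $Af_{j}\to Af$, $Bf_{j}\to Bf$ in $\mathcal{H}L^{2}(\mathbb{B}^{d},\mu_{\sigma})$. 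Using orthogonality of monomials in $\mathcal{H}L^{2}(\mathbb{B}^{d},\mu_{\sigma})$ for the polynomials $f_{j},g_{j}$ and passing to the limit gives
\[
\langle f,g\rangle_{\lambda}=\sum_{m}\overline{c_{m}}\,e_{m}\,a_{m}b_{m}\,\langle z^{m},z^{m}\rangle_{\mathcal{H}L^{2}(\mathbb{B}^{d},\mu_{\sigma})}=\sum_{m}\overline{c_{m}}\,e_{m}\,w_{m},
\]
where $e_{m}$ are the Taylor coefficients of $g$ (the $f=g$ case forces $\sum|c_{m}|^{2}w_{m}<\infty$, after which the general series converges absolutely by Cauchy--Schwarz). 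From this single formula the form $\langle\cdot,\cdot\rangle_{\lambda}$ is visibly Hermitian, nonnegative, and nondegenerate (since $w_{m}>0$ and a holomorphic function with vanishing Taylor coefficients is zero), and taking $f,g$ to be monomials yields $\langle z^{l},z^{m}\rangle_{\lambda}=\delta_{l,m}w_{m}$, the asserted formula.

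For completeness and the basis statement, the point is that the Taylor-coefficient map $f\mapsto(c_{m})$ is an isometry of $H(\mathbb{B}^{d},\lambda)$ onto the weighted sequence space $\ell^{2}_{w}=\{(c_{m}):\sum_{m}|c_{m}|^{2}w_{m}<\infty\}$. Surjectivity is the one genuinely new step: given $(c_{m})\in\ell^{2}_{w}$, Cauchy--Schwarz and the multinomial identity $\sum_{m}|z^{m}|^{2}/w_{m}=\sum_{m}\frac{\Gamma(\lambda+|m|)}{m!\,\Gamma(\lambda)}|z^{m}|^{2}=(1-|z|^{2})^{-\lambda}$ give $\sum_{m}|c_{m}|\,|z^{m}|\le\Vert(c_{m})\Vert_{w}\,(1-|z|^{2})^{-\lambda/2}$ for $z\in\mathbb{B}^{d}$, so $f:=\sum_{m}c_{m}z^{m}$ is holomorphic on $\mathbb{B}^{d}$; and for $0\le k\le n$ one has $|m|^{2k}\langle z^{m},z^{m}\rangle_{\mathcal{H}L^{2}(\mathbb{B}^{d},\mu_{\sigma})}/w_{m}=|m|^{2k}\Gamma(\sigma)\Gamma(\lambda+|m|)/(\Gamma(\lambda)\Gamma(\sigma+|m|))=O(|m|^{2k-2n})$, bounded in $m$ because $\sigma=\lambda+2n$, so $N^{k}f$ has coefficients square-summable against the $\sigma$-weights, i.e.\ $N^{k}f\in\mathcal{H}L^{2}(\mathbb{B}^{d},\mu_{\sigma})$ and $f\in H(\mathbb{B}^{d},\lambda)$. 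Thus $H(\mathbb{B}^{d},\lambda)\cong\ell^{2}_{w}$ isometrically, so $H(\mathbb{B}^{d},\lambda)$ is complete and the images of the standard basis vectors, namely the monomials, form an orthogonal basis.

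Finally, the displayed estimate shows point evaluations are bounded, with $|f(z)|\le\Vert f\Vert_{\lambda}(1-|z|^{2})^{-\lambda/2}$; the function $K_{\lambda}(\cdot,w)=(1-z\cdot\bar w)^{-\lambda}=\sum_{m}z^{m}\bar w^{m}/w_{m}$ has coefficient sequence $(\bar w^{m}/w_{m})\in\ell^{2}_{w}$ (by the same multinomial identity), hence lies in $H(\mathbb{B}^{d},\lambda)$, and $\langle f,K_{\lambda}(\cdot,w)\rangle_{\lambda}=\sum_{m}e_{m}w^{m}=f(w)$, identifying $K_{\lambda}$ as the reproducing kernel. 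The main obstacles I anticipate are the norm convergence $N^{k}f_{j}\to N^{k}f$ in $\mathcal{H}L^{2}(\mathbb{B}^{d},\mu_{\sigma})$ that lets one evaluate $\langle f,g\rangle_{\lambda}$ term-by-term, and the surjectivity of the coefficient map onto $\ell^{2}_{w}$, which is precisely where the hypothesis $\lambda+2n>d$ enters, through the polynomial decay of $\Gamma(\lambda+|m|)/\Gamma(\sigma+|m|)$; everything else is Gamma-function bookkeeping together with the standard reproducing-kernel Hilbert space formalism.
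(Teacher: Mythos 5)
Your argument is correct, and it takes a genuinely different route from the paper's. The paper never introduces the sequence space: it proves positivity of $\left\langle \cdot,\cdot\right\rangle _{\lambda}$ via $\left\langle f,N^{k}f\right\rangle \geq0$, proves completeness directly by showing that a Cauchy sequence in $H(\mathbb{B}^{d},\lambda)$ has each $N^{k}f_{j}$ Cauchy in $\mathcal{H}L^{2}(\mathbb{B}^{d},\mu_{\lambda+2n})$, proves completeness of the monomials by an interchange of sum and integral over balls of radius $r<1$, and obtains the reproducing kernel operator-theoretically as $\overline{[(AB)^{-1}\chi_{z}^{\lambda+2n}](w)}$ using the identity $(I+N/a)(1-\bar{z}\cdot w)^{-a}=(1-\bar{z}\cdot w)^{-(a+1)}$. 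You instead diagonalize everything at once and exhibit an isometric isomorphism of $H(\mathbb{B}^{d},\lambda)$ onto the weighted sequence space $\ell_{w}^{2}$; completeness, the orthogonal basis, and the kernel (via the binomial/multinomial expansion, which is the alternative the paper explicitly mentions and defers to the proof of Theorem \ref{hsl2.thm}) all fall out of that single identification. The price is that you must prove surjectivity of the coefficient map --- the step where $\lambda+2n>d$ enters through the bound $|m|^{2k}\Gamma(\lambda+\left\vert m\right\vert )/\Gamma(\lambda+2n+\left\vert m\right\vert )=O(|m|^{2k-2n})$ --- a step the paper's direct completeness argument avoids; what you gain is a completely explicit model of the space and a one-line verification of every remaining claim. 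Both proofs lean on the same two classical facts (norm convergence of Taylor partial sums in Bergman spaces, and the coefficient characterization of $\mathcal{H}L^{2}(\mathbb{B}^{d},\mu_{\sigma})$ for $\sigma>d$), and your key computation $a_{m}b_{m}\left\langle z^{m},z^{m}\right\rangle _{\sigma}=m!\Gamma(\lambda)/\Gamma(\lambda+\left\vert m\right\vert )$ is exactly the paper's monomial calculation. One bookkeeping slip at the end: with the inner product conjugate-linear in the slot carrying the coefficients $c_{m}$, the pairing $\left\langle f,K_{\lambda}(\cdot,w)\right\rangle _{\lambda}$ as you have written it evaluates to $\overline{f(w)}$; the kernel should sit in the other slot (or the conjugation convention flipped) to reproduce $f(w)$. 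This does not affect the substance of the argument.
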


Using power series, it is easily seen that for any holomorphic function $f,$
if $N^{n}f$ belongs to $\mathcal{H}L^{2}(\mathbb{B}^{d},\mu_{\lambda+2n}),$
then $N^{k}f$ belongs to $\mathcal{H}L^{2}(\mathbb{B}^{d},\mu_{\lambda+2n})$
for $0\leq k\,<n.$

Note that the reproducing kernel and the inner product of the monomials are
independent of $n.$ Thus, we obtain the same space of functions with the same
inner product, no matter which $n$ we use, so long as $\lambda+2n>d.$

From the reproducing kernel we obtain the pointwise bounds given by $\left\vert
f(z)\right\vert ^{2}\leq\left\Vert f\right\Vert _{\lambda}^{2}(1-\left\vert
z\right\vert ^{2})^{-\lambda}.$

\begin{proof}
Using a power series argument, it is easily seen that if $f$ and $N^{k}f$
belong to $\mathcal{H}L^{2}(\mathbb{B}^{d},\mu_{\lambda+2n})$, then
$\left\langle f,N^{k}f\right\rangle _{L^{2}(\mathbb{B}^{d},\mu_{\lambda+2n}%
)}\geq0.$ From this, we obtain positivity of the inner product $\left\langle
\cdot,\cdot\right\rangle _{\lambda}.$ If $f_{j}$ is a Cauchy sequence in
$H(\mathbb{B}^{d},\lambda),$ then positivity of the coefficients in the
expressions for $A$ and $B$ imply that for $0\leq k\leq n,$ $N^{k}f_{j}$ is a
Cauchy sequence in $\mathcal{H}L^{2}(\mathbb{B}^{d},\mu_{\lambda+2n}),$ which
converges (as in the proof of Theorem \ref{shift2n.thm}) to $N^{k}f.$ This
shows that $N^{k}f$ is in $\mathcal{H}L^{2}(\mathbb{B}^{d},\mu_{\lambda+2n})$
for each $0\leq k\leq n,$ and so $f\in H(\mathbb{B}^{d},\lambda).$ Further,
convergence of each $N^{k}f_{j}$ to $N^{k}f$ implies that $f_{j}$ converges to
$f$ in $H(\mathbb{B}^{d},\lambda).$

To compute the inner product of two monomials in $H(\mathbb{B}^{d},\lambda),$
we apply the definition. Since $Nz^{m}=|m|z^{m},$ we obtain%
\begin{align*}
&  \left\langle z^{l},z^{m}\right\rangle _{\lambda}\\
&  =\delta_{l,m}\left(  \frac{\lambda+|m|}{\lambda}\right)  \left(
\frac{\lambda+1+|m|}{\lambda+1}\right)  \cdots\left(  \frac{\lambda
+2n-1+|m|}{\lambda+2n-1}\right)  \frac{m!\Gamma(\lambda+2n)}{\Gamma
(\lambda+2n+|m|)}\\
&  =\delta_{l,m}\frac{m!\Gamma(\lambda)}{\Gamma(\lambda+|m|)},
\end{align*}
where we have used the known formula for the inner product of monomials in
$\mathcal{H}L^{2}(\mathbb{B}^{d},\mu_{\lambda+2n})$ (e.g., \cite{Z}).

Completeness of the monomials holds in $H(\mathbb{B}^{d},\lambda)$ for
essentially the same reason it holds in the ordinary Bergman spaces. For
$f\in\mathcal{H}L^{2}(\mathbb{B}^{d},\mu_{\lambda})$, expand $f$ in a Taylor
series and then consider $\left\langle z^{m},f\right\rangle _{\lambda}$. Each
term in the inner product is an integral over $\mathbb{B}^{d}$ with respect to
$\mu_{\lambda+2n}$, and each of these integrals can be computed as the limit
as $r$ tends to 1 of integrals over a ball of radius $r<1.$ On the ball of
radius $r,$ we may interchange the integral with the sum in the Taylor series.
But distinct monomials are orthogonal not just over $\mathbb{B}^{d}$ but also
over the ball of radius $r,$ as is easily verified. The upshot of all of this
is that $\left\langle z^{m},f\right\rangle _{\lambda}$ is a nonzero multiple
of the $m$th Taylor coefficient of $f.$ Thus if $\left\langle z^{m}%
,f\right\rangle _{\lambda}=0$ for all $m,$ $f$ is identically zero.

Finally, we address the reproducing kernel. Although one can use essentially
the same argument as in the case $\lambda>d,$ using the orthogonal basis of
monomials and a binomial expansion (see the proof of Theorem \ref{hsl2.thm}),
it is more enlightening to relate the reproducing kernel in $H(\mathbb{B}%
^{d},\lambda)$ to that in $\mathcal{H}L^{2}(\mathbb{B}^{d},\mu_{\lambda+2n}).$
We require some elementary properties of the operators $A$ and $B$; since the
monomials form an orthogonal basis of eigenvectors for these operators, these
properties are easily obtained. We need that $A$ is self-adjoint on its
natural domain and that $A$ and $B$ have bounded inverses.

Let $\chi_{z}^{\lambda+2n}$ be the unique element of $\mathcal{H}%
L^{2}(\mathbb{B}^{d},\mu_{\lambda+2n})$ for which
\[
\left\langle \chi_{z}^{\lambda+2n},f\right\rangle _{L^{2}(\mathbb{B}^{d}%
,\mu_{\lambda+2n})}=f(z)
\]
for all $f$ in $\mathcal{H}L^{2}(\mathbb{B}^{d},\mu_{\lambda+2n}).$
Explicitly, $\chi_{z}^{\lambda+2n}(w)=(1-\bar{z}\cdot w)^{-(\lambda+2n)}.$
(This is Theorem 2.2 of \cite{Z} with our $\lambda$ corresponding to
$n+\alpha+1$ in \cite{Z}.) Now, a simple calculation shows that%
\begin{equation}
(I+N/a)(1-\bar{z}\cdot w)^{-a}=(1-\bar{z}\cdot w)^{-(a+1)}, \label{chia}%
\end{equation}
where $N$ acts on the $w$ variable with $z$ fixed. From this, we see that
$N^{k}\chi_{z}^{\lambda+2n}$ is a bounded function for each fixed
$z\in\mathbb{B}^{d}$ and $k\in\mathbb{N},$ so that $\chi_{z}^{\lambda+2n}$ is
in $H(\mathbb{B}^{d},\lambda).$

For any $f\in H(\mathbb{B}^{d},\lambda)$ we compute that%
\begin{align*}
\left\langle f,(AB)^{-1}\chi_{z}^{\lambda+2n}\right\rangle _{\lambda}  &
=\left\langle Af,B(AB)^{-1}\chi_{z}^{\lambda+2n}\right\rangle _{L^{2}%
(\mathbb{B}^{d},\mu_{\lambda+2n)}}\\
&  =\left\langle f,\chi_{z}^{\lambda+2n}\right\rangle _{L^{2}(\mathbb{B}%
^{d},\mu_{\lambda+2n)}}=f(z).
\end{align*}
This shows that the reproducing kernel for $H(\mathbb{B}^{d},\lambda)$ is
given by $K_{\lambda}(z,w)=\overline{[(AB)^{-1}\chi_{z}^{\lambda+2n}](w)}.$
Using (\ref{chia}) repeatedly gives the desired result.
\end{proof}

We conclude this section with a simple lemma that will be useful in Section
\ref{hsop}.

\begin{lemma}
\label{2lambda.lem}For all $\lambda_{1},\lambda_{2}>0,$ if $f$ is in
$H(\mathbb{B}^{d},\lambda_{1})$ and $g$ is in $H(\mathbb{B}^{d},\lambda_{2})$
then $fg$ is in $H(\mathbb{B}^{d},\lambda_{1}+\lambda_{2}).$
\end{lemma}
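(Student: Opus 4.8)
The plan is to read everything off the explicit description of $H(\mathbb{B}^{d},\lambda)$ in Theorem~\ref{bergmansobolev.thm}. Write $\gamma_{\lambda}(m)=m!\,\Gamma(\lambda)/\Gamma(\lambda+|m|)$ for the squared norm $\langle z^{m},z^{m}\rangle_{\lambda}$. Since the monomials form an orthogonal basis, a holomorphic function $h=\sum_{n}c_{n}z^{n}$ on $\mathbb{B}^{d}$ lies in $H(\mathbb{B}^{d},\lambda)$ if and only if $\sum_{n}|c_{n}|^{2}\gamma_{\lambda}(n)<\infty$, in which case this sum equals $\|h\|_{\lambda}^{2}$. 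Now, given $f=\sum_{l}a_{l}z^{l}\in H(\mathbb{B}^{d},\lambda_{1})$ and $g=\sum_{m}b_{m}z^{m}\in H(\mathbb{B}^{d},\lambda_{2})$, the product $fg$ is holomorphic on $\mathbb{B}^{d}$ and, since power series converge absolutely on compact subsets, its Taylor coefficients are the convolution $c_{n}=\sum_{l+m=n}a_{l}b_{m}$. So the lemma reduces to proving the estimate $\sum_{n}|c_{n}|^{2}\gamma_{\lambda_{1}+\lambda_{2}}(n)<\infty$.

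The engine of the proof is the identity
\[
\frac{1}{\gamma_{\lambda_{1}+\lambda_{2}}(n)}=\sum_{l+m=n}\frac{1}{\gamma_{\lambda_{1}}(l)\,\gamma_{\lambda_{2}}(m)},
\]
which I would obtain by comparing coefficients of $z^{n}\bar{w}^{n}$ on the two sides of the factorization $K_{\lambda_{1}+\lambda_{2}}(z,w)=K_{\lambda_{1}}(z,w)\,K_{\lambda_{2}}(z,w)$ of the reproducing kernels, using the standard multinomial expansion $(1-z\cdot\bar{w})^{-\lambda}=\sum_{m}z^{m}\bar{w}^{m}/\gamma_{\lambda}(m)$ (cf.\ the proof of Theorem~\ref{bergmansobolev.thm}). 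Granting this, one applies the Cauchy--Schwarz inequality to the decomposition $c_{n}=\sum_{l+m=n}\bigl(a_{l}b_{m}\sqrt{\gamma_{\lambda_{1}}(l)\gamma_{\lambda_{2}}(m)}\bigr)\bigl(\gamma_{\lambda_{1}}(l)\gamma_{\lambda_{2}}(m)\bigr)^{-1/2}$ to obtain
\[
\gamma_{\lambda_{1}+\lambda_{2}}(n)\,|c_{n}|^{2}\le\sum_{l+m=n}|a_{l}|^{2}\gamma_{\lambda_{1}}(l)\,|b_{m}|^{2}\gamma_{\lambda_{2}}(m).
\]
Summing over $n$ and recognizing the right-hand side as a Cauchy product of nonnegative series gives $\|fg\|_{\lambda_{1}+\lambda_{2}}^{2}\le\|f\|_{\lambda_{1}}^{2}\,\|g\|_{\lambda_{2}}^{2}<\infty$, which is the assertion (and even records the multiplicative bound on norms).

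The only step that amounts to more than bookkeeping is the Gamma-function identity displayed above; once it is established, the rest is one application of Cauchy--Schwarz together with the rearrangement of a nonnegative double series. An alternative, more conceptual route would be to invoke the general principle from reproducing-kernel theory that, because $K_{\lambda_{1}+\lambda_{2}}=K_{\lambda_{1}}K_{\lambda_{2}}$, restriction to the diagonal defines a coisometry from $H(\mathbb{B}^{d},\lambda_{1})\otimes H(\mathbb{B}^{d},\lambda_{2})$ onto $H(\mathbb{B}^{d},\lambda_{1}+\lambda_{2})$; but the coefficientwise computation is self-contained and matches the style of the preceding section, so that is the route I would present.
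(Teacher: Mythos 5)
Your proof is correct, but it proceeds quite differently from the paper's. The paper fixes $n$ with $\lambda_{1}+n>d$ and $\lambda_{2}+n>d$, expands $N^{n}(fg)$ by the Leibniz rule, controls each term $N^{k}f\,N^{n-k}g$ by combining the pointwise reproducing-kernel bound $|N^{k}f(z)|^{2}\leq a_{k}(1-|z|^{2})^{-(\lambda_{1}+2k)}$ with square-integrability of the other factor, and splits the sum into the ranges $k\leq n/2$ and $k\geq n/2$ so that the factor being integrated always lives in an honest weighted Bergman space. You instead work entirely on the Taylor-coefficient side: the identity $\gamma_{\lambda_{1}+\lambda_{2}}(n)^{-1}=\sum_{l+m=n}\gamma_{\lambda_{1}}(l)^{-1}\gamma_{\lambda_{2}}(m)^{-1}$ is a valid Chu--Vandermonde-type convolution, correctly read off from $K_{\lambda_{1}}K_{\lambda_{2}}=K_{\lambda_{1}+\lambda_{2}}$ via the expansion (\ref{kernelSeries}) and uniqueness of power-series coefficients, and your Cauchy--Schwarz step and the rearrangement of the resulting nonnegative double series are both sound. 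Your route buys something the paper's does not: the clean multiplicative bound $\|fg\|_{\lambda_{1}+\lambda_{2}}\leq\|f\|_{\lambda_{1}}\|g\|_{\lambda_{2}}$ valid for \emph{all} $\lambda_{1},\lambda_{2}>0$, which sharpens the paper's opening remark that the naive estimate with constant $c_{\lambda_{1}+\lambda_{2}}/c_{\lambda_{1}}$ degenerates as $\lambda_{1}\downarrow d$. The paper's route, by contrast, does not require an explicit orthogonal basis with computable norms and so adapts more readily to settings where only the Sobolev-type description is available. Two small points you should make explicit if you write this up: that a holomorphic $h=\sum_{n}c_{n}z^{n}$ lies in $H(\mathbb{B}^{d},\lambda)$ if and only if $\sum_{n}|c_{n}|^{2}\gamma_{\lambda}(n)<\infty$ does follow from Theorem \ref{bergmansobolev.thm}, but it uses both completeness of the space and the fact (shown in that theorem's proof) that the orthogonal expansion of $h$ coincides with its Taylor expansion; and the Cauchy-product rearrangement at the end should be flagged as an application of Tonelli's theorem for nonnegative terms.
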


\begin{proof}
If, say, $\lambda_{1}>d,$ then we have the following simple argument:%
\begin{align*}
\left\Vert fg\right\Vert _{\lambda_{1}+\lambda_{2}}^{2}  &  =c_{\lambda
_{1}+\lambda_{2}}\int_{\mathbb{B}^{d}}\left\vert f(z)\right\vert
^{2}\left\vert g(z)\right\vert ^{2}(1-\left\vert z\right\vert ^{2}%
)^{\lambda_{1}+\lambda_{2}}~d\tau(z)\\
&  \leq c_{\lambda_{1}+\lambda_{2}}\left\Vert g\right\Vert _{\lambda_{2}}%
^{2}\int_{\mathbb{B}^{d}}\left\vert f(z)\right\vert ^{2}(1-\left\vert
z\right\vert ^{2})^{-\lambda_{2}}(1-\left\vert z\right\vert ^{2})^{\lambda
_{1}+\lambda_{2}}~d\tau(z)\\
&  =\frac{c_{\lambda_{1}+\lambda_{2}}}{c_{\lambda_{1}}}\left\Vert f\right\Vert
_{\lambda_{1}}^{2}\left\Vert g\right\Vert _{\lambda_{2}}^{2}.
\end{align*}
Unfortunately, $c_{\lambda_{1}+\lambda_{2}}/c_{\lambda_{1}}$ tends to infinity
as $\lambda_{1}$ approaches $d$ from above, so we cannot expect this simple
inequality to hold for $\lambda_{1}<d.$

For any $\lambda_{1},\lambda_{2}>0,$ choose $n$ so that $\lambda_{1}+n>d$ and
$\lambda_{2}+n>d.$ Then $fg$ belongs to $H(\mathbb{B}^{d},\lambda_{1}%
+\lambda_{2})$ provided that $N^{n}(fg)$ belongs to $\mathcal{H}%
L^{2}(\mathbb{B}^{d},\lambda_{1}+\lambda_{2}+2n).$ But%
\begin{equation}
N^{n}(fg)=\sum_{k=0}^{n}\binom{n}{k}N^{k}f~N^{n-k}g. \label{kterm}%
\end{equation}
Using Theorem \ref{bergmansobolev.thm}, it is easy to see that if $f$ belongs
to $H(\mathbb{B}^{d},\lambda_{1})$ then $N^{k}f$ belongs to $H(\mathbb{B}%
^{d},\lambda_{1}+2k)$. Thus,%
\[
\left\vert N^{k}f(z)\right\vert ^{2}\leq a_{k}(1-\left\vert z\right\vert
^{2})^{-(\lambda_{1}+2k)}.
\]

Now, for each term in (\ref{kterm}) with $k\leq n/2$, we then obtain the
following norm estimate:%
\begin{align}
&  c_{\lambda_{1}+\lambda_{2}+2n}\int_{\mathbb{B}^{d}}\left\vert
N^{k}f(z)N^{n-k}g(z)\right\vert ^{2}~(1-\left\vert z\right\vert ^{2}%
)^{\lambda_{1}+\lambda_{2}+2n}~d\tau(z)\nonumber\\
&  \leq c_{\lambda_{1}+\lambda_{2}+2n}a_{k}\int_{\mathbb{B}^{d}}\left\vert
N^{n-k}g(z)\right\vert ^{2}(1-\left\vert z\right\vert ^{2})^{\lambda
_{2}+2n-2k}~d\tau(z). \label{knorm}%
\end{align}
Since $k\leq n/2,$ we have $\lambda_{2}+2n-2k\geq\lambda_{2}+n>d.$ We are
assuming that $g$ is in $H(\mathbb{B}^{d},\lambda_{2}),$ so that $N^{n-k}g$ is
in $H(\mathbb{B}^{d},\lambda_{2}+2n-2k),$ which coincides with $\mathcal{H}%
L^{2}(\mathbb{B}^{d},\mu_{\lambda_{2}+2n-2k}).$ Thus, under our assumptions on
$f$ and $g,$ each term in (\ref{kterm}) with $k\leq n/2$ belongs to
$\mathcal{H}L^{2}(\mathbb{B}^{d},\lambda_{1}+\lambda_{2}+2n).$ A similar
argument with the roles of $f$ and $g$ reversed takes care of the terms with
$k\geq n/2.$
\end{proof}

\section{Toeplitz operators with polynomial symbols\label{poly.sec}}

In this section, we will consider our first examples of Toeplitz operators on
generalized Bergman spaces, those whose symbols are (not necessarily
holomorphic) polynomials. Such examples are sufficient to see some interesting
new phenomena, that is, properties of ordinary Toeplitz operator that fail
when extended to these generalized Bergman spaces. The definition of Toeplitz
operators for the case of polynomial symbols is consistent with the definition
we use in Section \ref{bddop} for a larger class of symbols.

For $\lambda>d,$ we define the Toeplitz operator $T_{\phi}$ by%
\[
T_{\phi}f=P_{\lambda}(\phi f)
\]
for all $f$ in $\mathcal{H}L^{2}(\mathbb{B}^{d},\mu_{\lambda})$ and all
bounded measurable functions $\phi.$ Recall that $P_{\lambda}$ is the
orthogonal projection from $L^{2}(\mathbb{B}^{d},\tau)$ onto the holomorphic
subspace. Because $P_{\lambda}$ is a self-adjoint operator on $L^{2}%
(\mathbb{B}^{d},\mu_{\lambda}),$ the matrix entries of $T_{\phi}$ may be
calculated as%
\begin{equation}
\left\langle f_{1},T_{\phi}f_{2}\right\rangle _{\mathcal{H}L^{2}%
(\mathbb{B}^{d},\mu_{\lambda})}=\left\langle f_{1},\phi f_{2}\right\rangle
_{L^{2}(\mathbb{B}^{d},\mu_{\lambda})},\quad\lambda>d, \label{matrix.entry}%
\end{equation}
for all $f_{1},f_{2}\in\mathcal{H}L^{2}(\mathbb{B}^{d},\mu_{\lambda}).$ From
this formula, it is easy to see that $T_{\bar{\phi}}=(T_{\phi})^{\ast}.$

If $\psi$ is a bounded holomorphic function and $\phi$ is any bounded
measurable function, then it is easy to see that $T_{\phi\psi}=T_{\phi}%
M_{\psi}.$ Thus, for any two multi-indices $m$ and $n,$ we have%
\begin{equation}
T_{\bar{z}^{m}z^{n}}=(M_{z^{m}})^{\ast}(M_{z^{n}}). \label{poly.def}%
\end{equation}
We will take (\ref{poly.def}) as a definition for $0<\lambda\leq d.$ Our first
task, then, is to show that $M_{z^{n}}$ is a bounded operator on
$\mathcal{H}L^{2}(\mathbb{B}^{d},\mu_{\lambda})$ for all $\lambda>0.$

\begin{proposition}
For all $\lambda>0$ and all multi-indices $n,$ the multiplication operator
$M_{z^{n}}$ is a bounded operator on $H(\mathbb{B}^{d},\lambda).$ Thus, for
any polynomial $\phi,$ the Toeplitz operator $T_{\phi}$ defined in
(\ref{poly.def}) is a bounded operator on $H(\mathbb{B}^{d},\lambda).$
\end{proposition}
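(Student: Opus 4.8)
The plan is to reduce the statement to a single computation with the orthogonal basis of monomials. By Theorem \ref{bergmansobolev.thm}, the monomials $z^m$ form an orthogonal basis of $H(\mathbb{B}^d,\lambda)$ with $\left\langle z^m,z^m\right\rangle_\lambda = m!\,\Gamma(\lambda)/\Gamma(\lambda+|m|)$. Since $M_{z^n}$ sends $z^m$ to $z^{m+n}$, it carries the orthogonal basis to an orthogonal family, so to bound $\|M_{z^n}\|$ it suffices to control the ratio of norms $\left\| z^{m+n}\right\|_\lambda^2 / \left\| z^m\right\|_\lambda^2$ uniformly in $m$. Explicitly this ratio is
\[
\frac{(m+n)!}{m!}\cdot\frac{\Gamma(\lambda+|m|)}{\Gamma(\lambda+|m|+|n|)}.
\]
The first factor is a polynomial in $m$ of degree $|n|$ (it is $\prod_j (m_j+1)\cdots(m_j+n_j)$), while the second factor is $1/\big[(\lambda+|m|)(\lambda+|m|+1)\cdots(\lambda+|m|+|n|-1)\big]$, which decays like $|m|^{-|n|}$. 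Hence the product is bounded above by a constant $C_{\lambda,n}$ independent of $m$; indeed one can take the crude bound $(m+n)!/m! \le (|m|+|n|)!/|m|! = (|m|+1)\cdots(|m|+|n|)$ and compare termwise with the denominator, each factor $(|m|+i)/(\lambda+|m|+i-1)$ being bounded as $m$ ranges over multi-indices.

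First I would record that $M_{z^n}$ maps each $z^m$ to $z^{m+n}$ and is therefore densely defined on the span of monomials, with the images still pairwise orthogonal. Then for $f=\sum_m a_m z^m \in H(\mathbb{B}^d,\lambda)$ I would write $\|M_{z^n}f\|_\lambda^2 = \sum_m |a_m|^2 \left\| z^{m+n}\right\|_\lambda^2 = \sum_m |a_m|^2\,\left\| z^m\right\|_\lambda^2 \cdot r_m$ with $r_m$ the ratio above, bound each $r_m \le C_{\lambda,n}$, and conclude $\|M_{z^n}f\|_\lambda^2 \le C_{\lambda,n}\|f\|_\lambda^2$. This shows $M_{z^n}$ is bounded (extending by continuity from the dense monomial span). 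The second assertion is then immediate: any polynomial $\phi$ is a finite linear combination of terms $\bar z^m z^n$, and $T_{\bar z^m z^n} = (M_{z^m})^\ast (M_{z^n})$ by (\ref{poly.def}); a product and finite sum of bounded operators is bounded, so $T_\phi$ is bounded on $H(\mathbb{B}^d,\lambda)$.

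I do not expect a serious obstacle here. The only point needing a little care is the uniform bound on the ratio $r_m$ — one must check that the polynomial growth of $(m+n)!/m!$ in $m$ is exactly cancelled by the decay of $\Gamma(\lambda+|m|)/\Gamma(\lambda+|m|+|n|)$, including the small-$|m|$ cases where $\lambda$ (which may be less than $1$) enters the denominator; taking a supremum over the countably many multi-indices $m$, which is visibly finite since $r_m \to (\text{const})$ or is itself bounded, settles this. A minor bookkeeping issue is that $M_{z^n}$ is initially only defined on polynomials, so I would note explicitly that the uniform estimate lets us extend it to a bounded operator on all of $H(\mathbb{B}^d,\lambda)$, and that this extension agrees with multiplication by $z^n$ since $L^2$-convergence in $H(\mathbb{B}^d,\lambda)$ implies pointwise convergence (by the reproducing-kernel bound $|f(z)|^2 \le \|f\|_\lambda^2 (1-|z|^2)^{-\lambda}$).
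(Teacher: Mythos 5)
Your proposal is correct and follows essentially the same route as the paper: both compute on the orthogonal basis of monomials from Theorem \ref{bergmansobolev.thm} and bound the ratio of norms uniformly in $m$. The only difference is cosmetic --- the paper first reduces to a single coordinate by writing $M_{z^n}$ as a product of the operators $M_{z_j}$, which makes the ratio simply $(m_j+1)/(\lambda+|m|)$ and avoids the slightly fussier combinatorial estimate on $(m+n)!/m!$ that your direct treatment of a general monomial requires.
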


\begin{proof}
The result is a is a special case of a result of Arazy and Zhang \cite{AZ} and
also of the results of Section \ref{bddop}, but it is easy to give a direct
proof. It suffices to show that $M_{z_{j}}$ is bounded for each $j.$ Since
$M_{z_{j}}$ preserves the orthogonality of the monomials, we obtain%
\[
\left\Vert M_{z_{j}}\right\Vert =\sup_{m}\frac{\left\Vert z_{j}z^{m}%
\right\Vert _{\lambda}}{\left\Vert z^{m}\right\Vert _{\lambda}}=\sup_{m}%
\frac{m_{j}+1}{|m|+\lambda}.
\]
Note that $m_{j}\leq\left\vert m\right\vert $ with equality when $m_{k}=0$ for
$k\neq j.$ Thus the supremum is finite and is easily seen to have the value of
1 if $\lambda\geq1$ and $1/\lambda$ if $\lambda<1.$
\end{proof}

We now record some standard properties of Toeplitz operators on (ordinary)
Bergman spaces. These properties hold for Toeplitz operators (defined by the
\textquotedblleft multiply and project\textquotedblright\ recipe) on any
holomorphic $L^{2}$ space. We will show that these properties \textit{do not}
hold for Toeplitz operators with polynomial symbols on the generalized Bergman
spaces $H(\mathbb{B}^{d},\lambda),$ $\lambda<d.$

\begin{proposition}
\label{basic.prop}For $\lambda>d$ and $\phi(z)$ bounded, the Toeplitz operator
$T_{\phi}$ on the space $\mathcal{H}L^{2}(\mathbb{B}^{d},d\mu_{\lambda}),$ which is
defined by $T_{\phi}f=P_{\lambda}(\phi f),$ has the following properties.

\begin{enumerate}
\item $\|T_{\phi}\|\leq\sup_{z}|\phi(z)|$

\item If $\phi(z)\geq0$ for all $z,$ then $T_{\phi}$ is a positive operator.
\end{enumerate}

Both of these properties fail when $\lambda<d.$ In fact, for $\lambda<d,$
there is no constant $C$ such that $\Vert T_{\phi}\Vert\leq C\sup_{z}%
|\phi(z)|$ for all polynomials $\phi.$
\end{proposition}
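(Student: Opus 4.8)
The plan is to prove the three assertions in turn, with most of the work going into the final one (failure of uniform boundedness for $\lambda<d$). For property (1), when $\lambda>d$ the Toeplitz operator is genuinely "multiply by $\phi$, then project," so $\|T_\phi f\|_\lambda = \|P_\lambda(\phi f)\|_\lambda \le \|\phi f\|_{L^2(\mu_\lambda)} \le (\sup_z|\phi(z)|)\,\|f\|_{L^2(\mu_\lambda)}$, using that $P_\lambda$ is a norm-decreasing projection and that $\|f\|_\lambda$ coincides with the $L^2(\mu_\lambda)$ norm in this range. For property (2), if $\phi\ge 0$ then $\langle f, T_\phi f\rangle_\lambda = \langle f, \phi f\rangle_{L^2(\mu_\lambda)} = \int_{\mathbb{B}^d}\phi(z)|f(z)|^2\,d\mu_\lambda(z)\ge 0$ by (\ref{matrix.entry}), so $T_\phi$ is positive. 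Both of these are routine.

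For the failure when $\lambda<d$, I would exhibit an explicit one-parameter family of polynomial symbols $\phi$ for which $\|T_\phi\|/\sup_z|\phi(z)|$ is unbounded. The natural choice is $\phi_N(z) = \bar z_1^{\,N} z_1^{\,N}$ (or a suitable normalization thereof), since by (\ref{poly.def}) we have $T_{\bar z_1^N z_1^N} = (M_{z_1^N})^\ast M_{z_1^N}$, a positive operator whose norm is $\|M_{z_1^N}\|^2$. Using that the monomials are an orthogonal basis of eigenvectors and that $\|z_1^N z^m\|_\lambda^2/\|z^m\|_\lambda^2$ can be computed from Theorem \ref{bergmansobolev.thm} as a ratio of Gamma functions, one gets
\[
\|M_{z_1^N}\|^2 = \sup_{m}\frac{\langle z_1^N z^m, z_1^N z^m\rangle_\lambda}{\langle z^m,z^m\rangle_\lambda}
= \sup_{k\ge 0}\frac{(k+N)!}{k!}\cdot\frac{\Gamma(\lambda+k)}{\Gamma(\lambda+k+N)},
\]
the supremum over $m$ being attained along $m=(k,0,\dots,0)$. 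As $k\to\infty$ this ratio tends to $1$, but for small $k$ — in particular $k=0$ — it equals $N!\,\Gamma(\lambda)/\Gamma(\lambda+N) = N!/[\lambda(\lambda+1)\cdots(\lambda+N-1)]$. When $\lambda<1$ this already exceeds $1$ and in fact blows up: for $0<\lambda<1$ the product $\lambda(\lambda+1)\cdots(\lambda+N-1)$ grows like $\Gamma(N+\lambda)/\Gamma(\lambda)\sim N^{\lambda-1}(N-1)!/\Gamma(\lambda)$, so the ratio is $\asymp N^{1-\lambda}\to\infty$. Meanwhile $\sup_z|\phi_N(z)| = \sup_z |z_1|^{2N} = 1$. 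Hence $\|T_{\phi_N}\| = \|M_{z_1^N}\|^2 \to\infty$ while $\sup|\phi_N|=1$, contradicting any inequality $\|T_\phi\|\le C\sup|\phi|$. This simultaneously kills property (1) (take $C=1$) and, since each $T_{\phi_N}$ is positive with large norm while $0\le\phi_N\le 1$ forces $T_1 - T_{\phi_N} = T_{1-\phi_N}$ to fail to be positive, it also breaks property (2) via $\phi = 1-\phi_N\ge0$.

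One subtlety to address: the computation above gives unboundedness only for $0<\lambda<1$, whereas the claim is stated for all $\lambda<d$. The fix is to push more factors of the monomial: instead of $z_1^N$ use a product like $z_1^{N}\cdots$ spread across several coordinates, or more simply iterate — consider $T_{\bar p\, p}=(M_p)^\ast M_p$ for $p$ a monomial of total degree $N$ in $d$ variables and optimize the eigenvalue ratio $\prod_j \frac{(m_j+p_j)!}{m_j!}\cdot\frac{\Gamma(\lambda+|m|)}{\Gamma(\lambda+|m|+N)}$ over $m$. Evaluating at $m=0$ gives $\frac{\prod_j p_j!\,\Gamma(\lambda)}{\Gamma(\lambda+N)}$, and by choosing $p=(1,1,\dots,1,0,\dots,0)$ with many ones one arranges the numerator $\prod p_j!=1$ while $\Gamma(\lambda+N)/\Gamma(\lambda)$ grows — but this only helps once $N$ is large relative to $\lambda$, which is automatic. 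Actually the cleanest route is to note that the ratio at $m=0$ equals $1/\binom{\lambda+N-1}{N}$-type expression and compare growth rates directly: $\Gamma(\lambda+N)/(\Gamma(\lambda)N!)\sim N^{\lambda-1}/\Gamma(\lambda)\to 0$ precisely when $\lambda<1$, so for $\lambda\ge 1$ the single-coordinate example genuinely fails and one must instead use that $M_{z_j}$ is an isometry-like contraction but build unboundedness from a different mechanism — here I would appeal to the Berezin-transform/symbol-calculus obstruction, or more concretely observe that for $\lambda<d$ the operators $T_{\bar z_j z_j}$ summed over $j$ give $\sum_j T_{\bar z_j z_j}$ whose diagonal entries on $z^m$ are $\sum_j (m_j+1)/(|m|+\lambda)\cdot(\text{shift})$ and exhibit eigenvalues exceeding $\sup|\sum|z_j|^2|=1$. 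The \textbf{main obstacle} is thus getting a single clean family that works uniformly down to $\lambda$ just below $d$ rather than just below $1$; I expect the resolution is to use symbols of the form $\phi = |q(z)|^2$ for a homogeneous polynomial $q$ of degree $N$ chosen (depending on $\lambda$) so that the ratio $\|M_q\|^2/\sup|q|^2$ is large, exploiting that $\sup_{|z|<1}|q(z)|^2$ can be made to decay faster in $N$ than the smallest relevant eigenvalue ratio $\Gamma(\lambda+\deg)/\Gamma(\lambda)\cdot(\text{coefficient data})^{-1}$ — a finite-dimensional optimization over coefficients of $q$ that is elementary once set up but requires care.
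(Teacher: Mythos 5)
Your treatment of the case $\lambda>d$ (properties 1 and 2) is the same as the paper's and is fine. The problem is the counterexample half. Your one concrete family, $\phi_N=|z_1|^{2N}$, gives $\Vert T_{\phi_N}\Vert=\Vert M_{z_1^N}\Vert^2=\sup_k\frac{(k+N)!}{k!}\frac{\Gamma(\lambda+k)}{\Gamma(\lambda+k+N)}$, and as you yourself compute, the relevant ratio $\prod_{j=0}^{N-1}\frac{j+1}{\lambda+j}$ diverges only when $\lambda<1$; for $1\leq\lambda<d$ the operator $(M_{z_1^N})^{\ast}M_{z_1^N}$ is a contraction (since $\Vert M_{z_j}\Vert=1$ for $\lambda\geq1$), so this family proves nothing in that range, and your derived counterexample to positivity ($1-\phi_N\geq0$) collapses with it. The several repairs you sketch do not close the gap: the multi-coordinate monomial $p=(1,\dots,1)$ gives a ratio $\Gamma(\lambda)/\Gamma(\lambda+N)$ at $m=0$, which tends to $0$, not $\infty$; the observation that $\sum_j T_{\bar z_j z_j}$ has eigenvalues $(d+|m|)/(\lambda+|m|)>1$ shows only that $\Vert T_{|z|^2}\Vert>1$ (these eigenvalues are bounded by $d/\lambda$, so no unboundedness follows); and the final $|q(z)|^2$ optimization is left unexecuted. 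So the proposal as written establishes the failure of properties 1 and 2, and the nonexistence of $C$, only for $0<\lambda<1$.

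The missing idea, which is the heart of the paper's proof, is to use powers of the \emph{radial} symbol rather than powers of a single coordinate: take $\phi_k(z)=|z|^{2k}=\sum_{|i|=k}\frac{k!}{i!}\bar z^i z^i$, a positive combination of $\binom{k+d-1}{d-1}$ rank-type terms $(M_{z^i})^{\ast}M_{z^i}$. Evaluating on the constant function $\mathbf{1}$, each term contributes $\frac{i!\,\Gamma(\lambda)}{\Gamma(\lambda+k)}$, and the count of multi-indices supplies exactly the factor of $d$ needed:
\[
T_{\phi_k}\mathbf{1}=\binom{k+d-1}{d-1}\frac{k!\,\Gamma(\lambda)}{\Gamma(\lambda+k)}\mathbf{1}=\prod_{j=0}^{k-1}\frac{d+j}{\lambda+j}\,\mathbf{1},
\]
and $\sum_j\frac{d-\lambda}{\lambda+j}=\infty$ forces this product to diverge for \emph{every} $\lambda<d$, while $\sup_z|\phi_k(z)|=1$. (For the plain failure of properties 1 and 2, the single symbol $|z|^2$ with eigenvalues $\frac{d+|m|}{\lambda+|m|}>1$, and its complement $1-|z|^2\geq0$, already suffice for all $\lambda<d$.) Your instinct to aggregate over coordinates was right, but you need to aggregate over all monomials of degree $k$ simultaneously, i.e., use the symbol $(\sum_j|z_j|^2)^k$, not a single $|z^i|^2$.
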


As we remarked in the introduction, when $\lambda=d,$ the space $H(\mathbb{B}%
^{d},\lambda)$ may be identified with the Hardy space. Thus Properties 1 and 2
in the proposition still hold when $\lambda=d,$ if, say, $\phi$ is continuous
up to the boundary of $\mathbb{B}^{d}$ (or otherwise has a reasonable
extension to the closure of $\mathbb{B}^{d}$).

\begin{proof}
When $\lambda>d,$ the projection operator $P_{\lambda}$ has norm 1 and the
multiplication operator $M_{\phi}$ has norm equal to $\sup_{z}|\phi(z)|$ as an
operator on $L^{2}(\mathbb{B}^{d},\mu_{\lambda})$. Thus, the restriction to
$\mathcal{H}L^{2}(\mathbb{B}^{d},\mu_{\lambda})$ of $P_{\lambda}M_{\phi}$ has
norm at most $\sup_{z}|\phi(z)|.$ Meanwhile, if $\phi$ is non-negative, then
from (\ref{matrix.entry}) we see that $\left\langle f,T_{\phi}f\right\rangle
\geq0$ for all $f\in\mathcal{H}L^{2}(\mathbb{B}^{d},\mu_{\lambda}).$

Let us now assume that $0<\lambda<d.$ Computing on the orthogonal basis in
Theorem \ref{bergmansobolev.thm}, it is a simple exercise to show that%
\begin{equation}
T_{\bar{z}_{j}z_{j}}(z^{m})=\frac{\Gamma(\lambda+|m|)}{m!}\frac{(m+e_{j}%
)!}{\Gamma(\lambda+|m|+1)}z^{m}=\frac{1+m_{j}}{\lambda+|m|}z^{m}. \label{tzm}%
\end{equation}
If we take $\phi(z)=|z|^{2},$ then summing (\ref{tzm}) on $j$ gives%
\[
T_{\phi}z^{m}=\frac{d+|m|}{\lambda+|m|}z^{m}.
\]
Since $\lambda<d,$ this shows that $\left\Vert T_{\phi}\right\Vert >1,$ even
though $\left\vert \phi(z)\right\vert \,<1$ for all $z\in\mathbb{B}^{d}.$
Thus, Property 1 fails for $\lambda<d.$ (From this calculation it easily
follows that if $\phi(z)=(1-\left\vert z\right\vert ^{2})/(\lambda-d),$ then
$T_{\phi}$ is the bounded operator $(\lambda I+N)^{-1},$ for all $\lambda\neq
d.$)

For the second property, we let $\psi(z)=1-\phi(z)=1-|z|^{2}$ which is
positive. From the above calculation we obtain
\[
\langle T_{\psi}z^{m},z^{m}\rangle_{H_{\lambda}}=\Vert z^{m}\Vert_{H_{\lambda
}}^{2}-\left(  \frac{d+|m|}{\lambda+|m|}\right)  \Vert z^{m}\Vert
_{H(\mathbb{B}^{d},\lambda)}^{2},
\]
which is negative if $0<\lambda<d$.

We now show that there is no constant $C$ such that $\Vert T_{\phi}\Vert\leq
C\sup_{z}|\phi(z)|$. Consider
\begin{align*}
\phi_{k}(z)  &  :=(|z|^{2})^{k}=\left(  \sum_{i=1}^{d}|z_{i}|^{2}\right)
^{k}\\
&  =\sum_{|i|=k}\frac{k!}{i!}(|z_{1}|^{2})^{i_{1}}(|z_{2}|^{2})^{i_{2}}%
\cdots(|z_{d}|^{2})^{i_{d}}=\sum_{|i|=k}\frac{k!}{i!}\overline{z}^{i}z^{i}.
\end{align*}
Computing on the orthogonal basis in Theorem \ref{bergmansobolev.thm} we
obtain
\[
T_{\phi_{k}}\mathbf{1}=\sum_{|i|=k}\frac{k!}{i!}(T_{\overline{z}^{i}z^{i}%
}\mathbf{1})=\sum_{|i|=k}\frac{k!}{i!}\frac{i!\Gamma(\lambda)}{\Gamma
(\lambda+k)}\mathbf{1}=\mathcal{I}\frac{k!\Gamma(\lambda)}{\Gamma(\lambda
+k)}\mathbf{1,}%
\]
where $\mathbf{1}$ is the constant function. Here, $\mathcal{I}$ is the number
of multi-indices $i$ of length $d$ such that $|i|=k,$ which is equal to
${\binom{k+d-1}{d-1}}$. Thus
\[
T_{\phi_{k}}\mathbf{1}=\frac{(k+d-1)!}{(d-1)!}\frac{\Gamma(\lambda)}%
{\Gamma(\lambda+k)}\mathbf{1}=\frac{(d+k-1)\cdots(d)}{(\lambda+k-1)\cdots
(\lambda)}\mathbf{1}=\prod_{j=0}^{k-1}\frac{d+j}{\lambda+j}\mathbf{1}.
\]

Consider $\prod_{j=0}^{k-1}\frac{d+j}{\lambda+j}=\prod_{j=0}^{k-1}\left(
1+\frac{d-\lambda}{\lambda+j}\right)  $. Since $d>\lambda,$ the terms
$\frac{d-\lambda}{\lambda+j}$ are positive and $\sum_{j=0}^{\infty}%
\frac{d-\lambda}{\lambda+j}$ diverges. This implies $\prod_{j=0}^{\infty}%
\frac{d+j}{\lambda+j}=\infty$. Since $\sup_{z}|\phi_{k}(z)|=1$ for all $k$,
there is no a constant $C$ such that $\Vert T_{\phi}\Vert\leq C\sup_{z}%
|\phi(z)|$.
\end{proof}

\begin{remark}
For $\lambda<d,$ there does not exist any positive measure $\nu$ on
$\mathbb{B}^{d}$ such that $\left\Vert f\right\Vert _{\lambda}=\left\Vert
f\right\Vert _{L^{2}(\mathbb{B}^{d},\nu)}$ for all $f$ in $H(\mathbb{B}%
^{d},\lambda).$ If such a $\nu$ did exist, then the argument in the first part
of the proof of Proposition \ref{basic.prop} would show that Properties 1 and
2 in the proposition hold.
\end{remark}

\section{Bounded Toeplitz operators\label{bddop}}

In this section, we will consider a class of symbols $\phi$ for which we will
be able to define a Toeplitz operator $T_{\phi}$ as a bounded operator on
$H(\mathbb{B}^{d},\lambda)$ for all $\lambda>0.$ Our definition of $T_{\phi}$
will agree (for the relevant class of symbols) with the usual
\textquotedblleft multiply and project\textquotedblright\ definition for
$\lambda>d.$ In light of the examples in the previous section, we cannot
expect boundedness of $\phi$ to be sufficient to define $T_{\phi}$ as a
bounded operator. Instead, we will consider functions $\phi$ for which $\phi$
and a certain number of derivatives of $\phi$ are bounded.

Our strategy is to use integration by parts to give an alternative expression
for the matrix entries of a Toeplitz operator with sufficiently regular
symbol, in the case $\lambda>d.$ We then take this expression as our
definition of Toeplitz operator in the case $0<\lambda\leq d.$

\begin{theorem}
\label{sobolevtoep.thm}Assume $\lambda>d$ and fix a positive integer $n.$ Let
$\phi$ be a function that is $2n$ times continuously differentiable and for
which $\bar{N}^{k}N^{l}\phi$ is bounded for all $0\leq k,l\leq n.$ Then%
\[
\left\langle f,T_{\phi}g\right\rangle _{\mathcal{H}L^{2}(\mathbb{B}^{d}%
,\mu_{\lambda})}=c_{\lambda+2n}\int_{\mathbb{B}^{d}}C\left[  \left(
\overline{f(z)}\phi(z)g(z)\right)  \right]  \left(  1-|z|^{2}\right)
^{\lambda+2n}d\tau(z)
\]
for all $f,g\in\mathcal{H}L^{2}(\mathbb{B}^{d},\mu_{\lambda}),$ where $C$ is
the operator given by%
\begin{equation}
C=\left(  I+\frac{\bar{N}}{\lambda+2n-1}\right)  \cdots\left(  I+\frac{\bar
{N}}{\lambda+n}\right)  \left(  I+\frac{N}{\lambda+n-1}\right)  \cdots\left(
I+\frac{N}{\lambda}\right)  . \label{cop}%
\end{equation}
Thus, there exist constants $A_{jklm}$ (depending on $n$ and $\lambda$) such
that%
\begin{equation}
\left\langle f,T_{\phi}g\right\rangle _{\mathcal{H}L^{2}(\mathbb{B}^{d}%
,\mu_{\lambda})}=\sum_{j,k,l,m=1}^{n}A_{jklm}\left\langle N^{j}f,\left(
\bar{N}^{k}N^{l}\phi\right)  N^{m}g\right\rangle _{L^{2}(\mathbb{B}^{d}%
,\mu_{\lambda+2n})}. \label{cexpanded}%
\end{equation}

\end{theorem}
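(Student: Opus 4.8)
The plan is to begin from the elementary identity $\langle f,T_\phi g\rangle_{\mathcal{H}L^2(\mathbb{B}^d,\mu_\lambda)}=\langle f,\phi g\rangle_{L^2(\mathbb{B}^d,\mu_\lambda)}$ (equation (\ref{matrix.entry}), valid since $\lambda>d$), rewrite the right-hand side using (\ref{tau.form}) as $c_\lambda\int_{\mathbb{B}^d}\overline{f(z)}\,\phi(z)\,g(z)\,(1-|z|^2)^{\lambda-d-1}\,dz$, and then iterate Lemma \ref{parts.lem} a total of $2n$ times: first $n$ applications of the $N$-form, at parameters $\lambda,\lambda+1,\dots,\lambda+n-1$, and then $n$ applications of the $\bar N$-form, at parameters $\lambda+n,\dots,\lambda+2n-1$. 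At each stage the function playing the role of ``$\psi$'' in Lemma \ref{parts.lem} is $\overline f\phi g$ with the partial product of the operators $(I+N/\cdot)$ and $(I+\bar N/\cdot)$ already applied; tracking the constants by $c_\mu/c_{\mu+1}=(\mu-d)/\mu$ (as in the proof of Lemma \ref{parts.lem}) shows that after all $2n$ steps one lands exactly on $c_{\lambda+2n}\int_{\mathbb{B}^d}C[\overline f\phi g]\,(1-|z|^2)^{\lambda+2n-d-1}\,dz$, which is the first displayed formula after reinserting $d\tau$. For the second formula, expand the operator $C$ and use the Leibniz rule together with the facts that $N$ annihilates $\overline f$, $\bar N$ annihilates $g$, and $\bar N^k\overline f=\overline{N^kf}$: this writes $C[\overline f\phi g]$ as a finite linear combination, with explicit constant coefficients, of terms $\overline{N^jf}\,(\bar N^kN^l\phi)\,N^mg$ with $0\le j,k,l,m\le n$; integrating each term against $(1-|z|^2)^{\lambda+2n}\,d\tau$ and absorbing $c_{\lambda+2n}$ yields (\ref{cexpanded}).

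The only real subtlety is that Lemma \ref{parts.lem} requires the function it is applied to (and one more $N$- or $\bar N$-derivative of it) to be bounded, whereas a general $f\in\mathcal{H}L^2(\mathbb{B}^d,\mu_\lambda)$ is not bounded, so the iteration is not literally legitimate. I would deal with this exactly as in the proof of Theorem \ref{shift2n.thm}: first prove both displayed formulas when $f$ and $g$ are \emph{polynomials}. In that case $\overline f\phi g$ is $C^{2n}$, and since $\overline{\mathbb{B}^d}$ is compact and $\bar N^kN^l\phi$ is bounded for $0\le k,l\le n$, every partial product appearing in the iteration, and one further derivative of it, is bounded; hence Lemma \ref{parts.lem} applies at each of the $2n$ steps (its parameters $\lambda,\dots,\lambda+2n-1$ all exceed $d$). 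Then pass to the limit: pick polynomials $f_j\to f$ and $g_j\to g$ in $\mathcal{H}L^2(\mathbb{B}^d,\mu_\lambda)$. The left-hand side converges because $T_\phi$ is bounded (Proposition \ref{basic.prop}, applicable since $\phi$ is bounded and $\lambda>d$). For the right-hand side, Theorem \ref{shift2n.thm} gives $N^jf,N^jg\in\mathcal{H}L^2(\mathbb{B}^d,\mu_{\lambda+2n})$ for $0\le j\le n$, with $N^jf_j\to N^jf$ and $N^jg_j\to N^jg$ in $\mathcal{H}L^2(\mathbb{B}^d,\mu_{\lambda+2n})$ (this convergence is established in its proof); since each $\bar N^kN^l\phi$ is bounded, multiplication by it is bounded on $L^2(\mathbb{B}^d,\mu_{\lambda+2n})$, so every term $\langle N^jf_j,(\bar N^kN^l\phi)N^mg_j\rangle_{L^2(\mu_{\lambda+2n})}$ converges to $\langle N^jf,(\bar N^kN^l\phi)N^mg\rangle_{L^2(\mu_{\lambda+2n})}$. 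This simultaneously shows that $C[\overline f\phi g](1-|z|^2)^{\lambda+2n}$ is integrable --- each term being a bounded function times a product of two $L^2(\mu_{\lambda+2n})$ functions --- and lets the polynomial identities pass to the limit.

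Thus the main obstacle is not any single computation but arranging the polynomial approximation so that both sides converge at once; the crucial input is Theorem \ref{shift2n.thm}, which controls the derivatives $N^jf$ in the $\mu_{\lambda+2n}$-norm and guarantees that the approximants converge in the stronger sense ``$n$ derivatives in $\mathcal{H}L^2(\mathbb{B}^d,\mu_{\lambda+2n})$.'' The integration by parts itself is just a mechanical iteration of Lemma \ref{parts.lem}, and the constant bookkeeping via $c_\mu/c_{\mu+1}=(\mu-d)/\mu$ is the same computation already carried out there.
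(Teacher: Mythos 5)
Your proposal is correct and follows essentially the same route as the paper: prove the identity for polynomial $f,g$ by iterating Lemma \ref{parts.lem} ($n$ times in the $N$-form, then $n$ times in the $\bar N$-form, tracking $c_\mu/c_{\mu+1}=(\mu-d)/\mu$), expand $C[\overline f\phi g]$ via the Leibniz rule to get (\ref{cexpanded}), and then pass to general $f,g$ by polynomial approximation using the convergence of $N^jf_a$ in $\mathcal{H}L^2(\mathbb{B}^d,\mu_{\lambda+2n})$ supplied by Theorem \ref{shift2n.thm}. Your write-up is in fact somewhat more explicit than the paper's about why both sides converge under the approximation.
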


\begin{proof}
Assume at first that $f$ and $g$ are polynomials, so that $f$ and $g$ and all
of their derivatives are bounded. We use (\ref{matrix.entry}) and apply the
first equality in Lemma \ref{parts.lem} with $\psi=\bar{f}\phi g.$ We then
apply the first equality in the lemma again with $\psi=(I+N/\lambda)[\bar
{f}\phi g].$ We continue on in this fashion until we have applied the first
equality in Lemma \ref{parts.lem} $n$ times and the second equality $n$ times.
This establishes the desired equality in the case that $f$ and $g$ are
polynomials. For general $f$ and $g$ in $\mathcal{H}L^{2}(\mathbb{B}^{d}%
,\mu_{\lambda}),$ we approximate by sequences $f_{a}$ and $g_{a}$ of
polynomials. From Theorem \ref{shift2n.thm} we can see that convergence of
$f_{a}$ and $g_{a}$ in $\mathcal{H}L^{2}(\mathbb{B}^{d},\mu_{\lambda})$
implies convergence of $N^{j}f_{a}$ and $N^{k}g_{a}$ to $N^{j}f$ and $N^{k}g,$
so that applying (\ref{cexpanded}) to $f_{a}$ and $g_{a}$ and taking a limit
establishes the desired result for $f$ and $g.$
\end{proof}

\begin{definition}
\label{sobolevtoep.def}Assume $0<\lambda\leq d$ and fix a positive integer $n$
such that $\lambda+2n>d.$ Let $\phi$ be a function that is $2n$ times
continuously differentiable and for which $\bar{N}^{k}N^{l}\phi$ is bounded
for all $0\leq k,l\leq n.$ Then we define the Toeplitz operator $T_{\phi}$ to
be the unique bounded operator on $H(\mathbb{B}^{d},\lambda)$ whose matrix
entries are given by%
\begin{equation}
\left\langle f,T_{\phi}g\right\rangle _{H(\mathbb{B}^{d},\lambda)}%
=c_{\lambda+2n}\int_{\mathbb{B}^{d}}C\left[  \left(  \overline{f(z)}%
\phi(z)g(z)\right)  \right]  \left(  1-|z|^{2}\right)  ^{\lambda+2n}dz,
\label{sobolevtoep.form}%
\end{equation}
where $C$ is given by (\ref{cop}).
\end{definition}

Note that from Theorem \ref{bergmansobolev.thm}, $N^{j}f$ and $N^{m}g$ belong
to $L^{2}(\mathbb{B}^{d},\mu_{\lambda+2n})$ for all $0\leq j,m\leq n,$ for all
$f$ and $g$ in $\mathcal{H}L^{2}(\mathbb{B}^{d},\mu_{\lambda}).$ Furthermore,
$\left\Vert N^{j}f\right\Vert _{L^{2}(\mathbb{B}^{d},\mu_{\lambda+2n})}$ and
$\left\Vert N^{m}g\right\Vert _{L^{2}(\mathbb{B}^{d},\mu_{\lambda+2n})}$ are
bounded by constants times $\left\Vert f\right\Vert _{\lambda}$ and
$\left\Vert g\right\Vert _{\lambda}$, respectively. Thus, the right-hand side
of (\ref{sobolevtoep.form}) is a continuous sesquilinear form on
$H(\mathbb{B}^{d},\lambda),$ which means that there is a unique bounded
operator $T_{\phi}$ whose matrix entries are given by (\ref{sobolevtoep.form}).

If $\lambda=d,$ then (as discussed in the introduction) the Hilbert space
$H(\mathbb{B}^{d},\lambda)$ is the Hardy space of holomorphic functions that
are square-integrable over the boundary. In that case, the Toeplitz operator
$T_{\phi}$ will be the zero operator whenever $\phi$ is identically zero on
the boundary of $\mathbb{B}^{d}.$ If $\lambda=d-1,$ $d-2,$ $\ldots,$ then the
inner product on $H(\mathbb{B}^{d},\lambda)$ can be related to the inner
product on the Hardy space. It is not hard to see that in these cases,
$T_{\phi}$ will be the zero operator if $\phi$ and enough of its derivatives
vanish on the boundary of $\mathbb{B}^{d}.$

Let us consider the case in which $\phi(z)=\overline{\psi_{1}(z)}\psi_{2}(z),$
where $\psi_{1}$ and $\psi_{2}$ are holomorphic functions such that the
function and the first $n$ derivatives are bounded. Then when applying $C$ to
$\overline{f(z)}\phi(z)g(z),$ all the $N$-factors go onto the expression $\psi_{2}(z)g(z)$
and all the $\bar{N}$-factors go onto $\overline{f(z)}\overline{\psi_{1}(z)}.$
Recalling from Theorem \ref{bergmansobolev.thm} the formula for the inner
product on $H(\mathbb{B}^{d},\lambda)$, we see that%
\[
\left\langle f,T_{\phi}g\right\rangle _{\mathcal{H}L^{2}(\mathbb{B}^{d}%
,\mu_{\lambda})}=\left\langle \psi_{1}f,\psi_{2}g\right\rangle _{H(\mathbb{B}%
^{d},\lambda)},
\]
as expected. This means that in this case, $T_{\bar{\psi}_{1}\psi_{2}%
}=(M_{\psi_{1}})^{\ast}(M_{\psi_{2}}),$ as in the case $\lambda>d.$ In
particular, Definition \ref{sobolevtoep.def} agrees with the definition we
used in Section \ref{poly.sec} in the case that $\phi$ is a polynomial in $z$
and $\bar{z}.$

\section{Hilbert--Schmidt Toeplitz operators\label{hsop}}

\subsection{Statement of results}

In this section, we will give sufficient conditions under which a Toeplitz
operator $T_{\phi}$ can be defined as a Hilbert--Schmidt operator on
$H(\mathbb{B}^{d},\lambda).$ Specifically, if $\phi$ belongs to $L^{2}%
(\mathbb{B}^{d},\tau)$ then $T_{\phi}$ can be defined as a Hilbert--Schmidt
operator, \textit{provided} that $\lambda>d/2.$ Meanwhile, if $\phi$ belongs
to $L^{1}(\mathbb{B}^{d},\tau),$ then $T_{\phi}$ can be defined as a
Hilbert--Schmidt operator for all $\lambda>0.$ In both cases, we define
$T_{\phi}$ in such a way that for all bounded functions $f$ and $g$ in
$H(\mathbb{B}^{d},\lambda),$ we have%
\begin{equation}
\left\langle f,T_{\phi}g\right\rangle _{\lambda}=c_{\lambda}\int
_{\mathbb{B}^{d}}\overline{f(z)}\phi(z)g(z)(1-|z|^{2})^{\lambda}~d\tau(z),
\label{hsmatrix}%
\end{equation}
where $c_{\lambda}$ is defined by $c_{\lambda}=\Gamma(\lambda)/(\pi^{d}%
\Gamma(\lambda-d)).$ This expression is identical to (\ref{matrix.entry}) in
the case $\lambda>d.$ The value of $c_{\lambda}$ should be interpreted as 0
when $\lambda-d=0,-1,-2,\ldots$. This means that for $\phi$ in $L^{2}%
(\mathbb{B}^{d},\tau)$ or $L^{1}(\mathbb{B}^{d},\tau)$ (but not for other
classes of symbols!), $T_{\phi}$ is the zero operator when $\lambda
=d,d-1,\ldots.$ This strange phenomenon is discussed in the next subsection.
Note that we are \textit{not} claiming $T_{\phi}=0$ for arbitrary symbols when
$\lambda=d,d-1,\ldots,$ but only for symbols that are integrable or
square-integrable with respect to the hyperbolic volume measure $\tau.$ Such
functions must have reasonable rapid decay (in an average sense) near the
boundary of $\mathbb{B}^{d}.$

In the case $\phi\in L^{2}(\mathbb{B}^{d},\tau),$ the restriction
$\lambda>d/2$ is easy to explain: the function $(1-|z|^{2})^{\lambda}$ belongs
to $L^{2}(\mathbb{B}^{d},\tau)$ if and only if $\lambda>d/2.$ Thus, if $f$ and
$g$ are bounded and $\phi$ is in $L^{2}(\mathbb{B}^{d},\tau),$ then
(\ref{hsmatrix}) is absolutely convergent for $\lambda>d/2.$

In this subsection, we state our results; in the next subsection, we discuss
some unusual properties of $T_{\phi}$ for $\lambda<d$; and in the last
subsection of this section we give the proofs.

We begin by considering symbols $\phi$ in $L^{2}(\mathbb{B}^{d},\tau).$

\begin{theorem}
\label{alambda.thm}Fix $\lambda>d/2$ and let $c_{\lambda}=\Gamma(\lambda
)/(\pi^{d}\Gamma(\lambda-d)).$ (We interpret $c_{\lambda}$ to be zero if
$\lambda$ is an integer and $\lambda\leq d$.) Then the operator $A_{\lambda}$
given by%
\[
A_{\lambda}\phi(z)=c_{\lambda}^{2}\int_{\mathbb{B}^{d}}\left[  \frac
{(1-|z|^{2})(1-|w|^{2})}{(1-w\cdot\bar{z})(1-\bar{w}\cdot z)}\right]
^{\lambda}\phi(w)\,d\tau(w)
\]
is a bounded operator from $L^{2}(\mathbb{B}^{d},\tau)$ to itself.
\end{theorem}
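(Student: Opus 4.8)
The plan is to realize $A_\lambda$ as the composition $B_\lambda B_\lambda^*$, where $B_\lambda$ is the integral operator with kernel proportional to $(1-|z|^2)^\lambda(1-w\cdot\bar z)^{-\lambda}$ mapping an appropriate reference Hilbert space to $L^2(\mathbb{B}^d,\tau)$, and then to check that $B_\lambda$ is bounded. Concretely, the bracketed kernel factors as the product of $c_\lambda(1-|z|^2)^\lambda(1-w\cdot\bar z)^{-\lambda}$ and its conjugate $c_\lambda(1-|w|^2)^\lambda(1-\bar w\cdot z)^{-\lambda}$, so that $A_\lambda\phi(z)=\int_{\mathbb{B}^d}\beta_\lambda(z,w)\,\overline{\beta_\lambda(w,z)}\,\phi(w)\,d\tau(w)$ with $\beta_\lambda(z,w)=c_\lambda(1-|z|^2)^\lambda(1-w\cdot\bar z)^{-\lambda}$. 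The point is that, when $\lambda>d$, $(1-w\cdot\bar z)^{-\lambda}=K_\lambda(w,z)$ is the reproducing kernel and $\beta_\lambda(z,\cdot)$, as a function of $w$, is $c_\lambda(1-|z|^2)^\lambda$ times the coherent state $\chi_z^\lambda$; then $A_\lambda\phi(z)=c_\lambda(1-|z|^2)^\lambda\langle\chi_z^\lambda,P_\lambda(\overline{\,\bar\beta\,}\phi)\rangle$ collapses to the Berezin transform composed with multiplication, and boundedness is classical. The work is to get a proof that survives analytic continuation in $\lambda$ down to $\lambda>d/2$.

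First I would set $\Phi_\lambda(z)=c_\lambda(1-|z|^2)^\lambda$, which lies in $L^2(\mathbb{B}^d,\tau)$ precisely when $\lambda>d/2$ (as noted in the text), and consider the operator $B_\lambda\colon H(\mathbb{B}^d,\lambda)\to L^2(\mathbb{B}^d,\tau)$ defined on the orthonormal basis of normalized monomials by $B_\lambda\colon e_m\mapsto \Phi_\lambda\cdot e_m$ — equivalently, $B_\lambda h(z)=\Phi_\lambda(z)h(z)$. A direct computation with the monomial inner products from Theorem \ref{bergmansobolev.thm} shows $\langle B_\lambda e_l, B_\lambda e_m\rangle_{L^2(\tau)}=\delta_{l,m}\,c_\lambda^2\int_{\mathbb{B}^d}|z^m|^2(1-|z|^2)^{2\lambda}\,d\tau(z)/\|z^m\|_\lambda^2$, and the ratio of Gamma functions one gets is bounded in $m$ exactly when $2\lambda>d$. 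Hence $B_\lambda$ is bounded (indeed its operator norm squared is the supremum over $m$ of these diagonal entries), and the adjoint $B_\lambda^*\colon L^2(\mathbb{B}^d,\tau)\to H(\mathbb{B}^d,\lambda)$ is $B_\lambda^*\psi=P_\lambda^{H}(\overline{\Phi_\lambda}\,\psi)$ where $P_\lambda^H$ denotes the reproducing-kernel projection onto $H(\mathbb{B}^d,\lambda)$, which exists and is bounded for all $\lambda>0$. Then $B_\lambda B_\lambda^*$ is a bounded operator on $L^2(\mathbb{B}^d,\tau)$, and writing out its integral kernel using the reproducing kernel $K_\lambda(z,w)=(1-z\cdot\bar w)^{-\lambda}$ of $H(\mathbb{B}^d,\lambda)$ recovers exactly the kernel of $A_\lambda$; I would double-check the bookkeeping of the two factors of $c_\lambda$ and the conjugations here, since that is where sign/exponent slips happen.

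The main obstacle is making the identification $A_\lambda=B_\lambda B_\lambda^*$ rigorous for $d/2<\lambda\le d$, where $H(\mathbb{B}^d,\lambda)$ is no longer an honest $L^2$ space and $P_\lambda^H$ is not literally an orthogonal projection inside some ambient $L^2(\mu)$. I would handle this by working entirely on the monomial basis: both $A_\lambda$ (via its explicit kernel, using $\sum_m z^m\bar w^m \cdot(\text{coeff})$-type expansions of $(1-w\cdot\bar z)^{-\lambda}$ and $(1-\bar w\cdot z)^{-\lambda}$, and term-by-term integration justified by absolute convergence on balls of radius $r<1$ followed by $r\to1$) and $B_\lambda B_\lambda^*$ are shown to be diagonalized by the monomials with the \emph{same} eigenvalues, namely $c_\lambda^2\,\big(\text{those Gamma ratios}\big)$, which are finite and bounded for $\lambda>d/2$. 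That every step is legitimate for $\lambda>d/2$ follows because the only integrals appearing are $\int_{\mathbb{B}^d}|z^m|^2(1-|z|^2)^{2\lambda-d-1}\,dz$, finite iff $2\lambda>d$; the interchange of sum and integral is the routine radius-$r$ argument already used repeatedly in Section \ref{bergmansobolev.sec}. Finally, boundedness of $A_\lambda$ is immediate once it is diagonal with a bounded sequence of eigenvalues in an orthonormal-type expansion — or, more invariantly, from $\|A_\lambda\|=\|B_\lambda\|^2<\infty$. The one genuinely new point relative to the classical $\lambda>d$ story is verifying that the relevant supremum of Gamma ratios stays finite at the lowered threshold, and a Stirling estimate on $\Gamma(\lambda+|m|)\Gamma(2\lambda-d+|m|)/\Gamma(2\lambda+|m|)/\Gamma(\lambda-d+|m|)$-type quantities closes that gap.
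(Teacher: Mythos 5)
There is a genuine gap, and it sits at the heart of the argument: the identity $A_{\lambda}=B_{\lambda}B_{\lambda}^{\ast}$ is false. Writing $\Phi_{\lambda}(z)=c_{\lambda}(1-|z|^{2})^{\lambda}$ and $B_{\lambda}h=\Phi_{\lambda}h$, one has $B_{\lambda}h(z)=\Phi_{\lambda}(z)\left\langle \chi_{z}^{\lambda},h\right\rangle _{\lambda}$, and hence
\[
B_{\lambda}B_{\lambda}^{\ast}\psi(z)=c_{\lambda}^{2}\int_{\mathbb{B}^{d}}\frac{(1-|z|^{2})^{\lambda}(1-|w|^{2})^{\lambda}}{(1-\bar{w}\cdot z)^{\lambda}}\,\psi(w)\,d\tau(w),
\]
whose kernel contains only \emph{one} power of the reproducing kernel, whereas the kernel of $A_{\lambda}$ contains $(1-w\cdot\bar{z})^{-\lambda}(1-\bar{w}\cdot z)^{-\lambda}=|1-\bar{w}\cdot z|^{-2\lambda}$. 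This is not a bookkeeping slip that a different placement of conjugates can fix: any operator of the form $M_{\Phi}P$ with $P$ a reproducing-kernel evaluation produces a sesquiholomorphic kernel with a single power of $K_{\lambda}$. (Your own factorization also fails algebraically: since $\overline{z\cdot\bar{w}}=w\cdot\bar{z}$, the product $\beta_{\lambda}(z,w)\overline{\beta_{\lambda}(w,z)}$ equals $c_{\lambda}^{2}(1-|z|^{2})^{\lambda}(1-|w|^{2})^{\lambda}(1-w\cdot\bar{z})^{-2\lambda}$, not the kernel of $A_{\lambda}$.) The fallback of ``diagonalizing both operators on the monomials'' cannot rescue this: $A_{\lambda}$ acts on all of $L^{2}(\mathbb{B}^{d},\tau)$, the monomials are not elements of that space (the measure $\tau$ is infinite), they are nowhere near a complete system in it, and $A_{\lambda}z^{m}$ is not a multiple of $z^{m}$ (the output carries the prefactor $(1-|z|^{2})^{\lambda}$ and the $w$-integral against $|1-\bar{z}\cdot w|^{-2\lambda}$ is neither holomorphic nor antiholomorphic in $z$). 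What your computation does correctly prove is that $M_{\Phi_{\lambda}}:H(\mathbb{B}^{d},\lambda)\rightarrow L^{2}(\mathbb{B}^{d},\tau)$ is bounded when $2\lambda>d$; but that operator is not a factor of $A_{\lambda}$.

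The underlying difficulty is that for $d/2<\lambda\leq d$ the boundedness of $A_{\lambda}$ cannot be reduced to an integrability check on the weight. By the automorphism-invariance of the kernel, $\int_{\mathbb{B}^{d}}F_{\lambda}(z,w)\,d\tau(w)=c_{\lambda}^{2}\int_{\mathbb{B}^{d}}(1-|u|^{2})^{\lambda}\,d\tau(u)$, which is finite only for $\lambda>d$; so for $\lambda\leq d$ the rows of the kernel are not integrable and no Schur-type or diagonal estimate suffices. Some genuinely global input is required, and the proof supplies it in one of two ways: either by viewing $A_{\lambda}$ as convolution on $G=PSU(d,1)$ with the bi-$K$-invariant function $f_{\lambda}$, which lies in $L^{p}(G)$ for some $p<2$ exactly when $\lambda>d/2$, and invoking the Kunze--Stein phenomenon (via boundedness of the spherical transform of $f_{\lambda}$); or by the identity $\Delta_{z}F_{\lambda}=\lambda(\lambda-d)(F_{\lambda}-F_{\lambda+1})$, which yields $A_{\lambda}=-\lambda(\lambda-d)[\Delta-\lambda(\lambda-d)I]^{-1}A_{\lambda+1}$ and reduces to the known case $\lambda+1>d$ because $\lambda(\lambda-d)>-d^{2}/4$ lies in the resolvent set of the hyperbolic Laplacian. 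A correct operator factorization of $A_{\lambda}$ does exist, namely $A_{\lambda}=V^{\ast}V$ with $V\phi=T_{\phi}$ mapping into Hilbert--Schmidt operators, but proving $V$ bounded is exactly the content of Theorem \ref{hsl2.thm} and still rests on one of the arguments above; it is not obtained from the multiplication operator $M_{\Phi_{\lambda}}$.
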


\begin{theorem}
\label{hsl2.thm}Fix $\lambda>d/2.$ Then for each $\phi\in L^{2}(\mathbb{B}%
^{d},\tau)$, there is a unique Hilbert--Schmidt operator on $H(\mathbb{B}%
^{d},\lambda),$ denoted $T_{\phi},$ with the property that%
\begin{equation}
\left\langle f,T_{\phi}g\right\rangle _{\lambda}=c_{\lambda}\int
_{\mathbb{B}^{d}}\overline{f(z)}\phi(z)g(z)(1-|z|^{2})^{\lambda}~d\tau(z)
\label{hsmatrix2}%
\end{equation}
for all bounded holomorphic functions $f$ and $g$ in $H(\mathbb{B}^{d}%
,\lambda).$ The Hilbert--Schmidt norm of $T_{\phi}$ is given by%
\[
\left\Vert T_{\phi}\right\Vert _{HS}^{2}=\left\langle \phi,A_{\lambda}%
\phi\right\rangle _{L^{2}(\mathbb{B}^{d},\tau)}.
\]

\end{theorem}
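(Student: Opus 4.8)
The plan is to realize $T_{\phi}$ explicitly through its matrix entries in the orthogonal basis of monomials, compute the Hilbert--Schmidt norm as a sum over pairs of multi-indices, and recognize the resulting sum as $\langle \phi, A_{\lambda}\phi\rangle_{L^{2}(\mathbb{B}^{d},\tau)}$, at which point Theorem \ref{alambda.thm} gives finiteness. First I would note that the right-hand side of (\ref{hsmatrix2}) is absolutely convergent when $f,g$ are bounded and $\phi\in L^{2}(\mathbb{B}^{d},\tau)$, precisely because $(1-|z|^{2})^{\lambda}\in L^{2}(\mathbb{B}^{d},\tau)$ for $\lambda>d/2$ (as remarked just before the statement) and the product $\overline{f}g$ is bounded; so the formula at least makes sense on the dense subspace of bounded holomorphic functions, in particular on the monomials. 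Taking $f=z^{l}$, $g=z^{m}$ in (\ref{hsmatrix2}) and integrating in polar-type coordinates (using that distinct monomials are orthogonal over every sphere), one gets
\[
\left\langle z^{l}, T_{\phi} z^{m}\right\rangle_{\lambda}
= c_{\lambda}\int_{\mathbb{B}^{d}} \overline{z^{l}}\,z^{m}\,\phi(z)\,(1-|z|^{2})^{\lambda}\,d\tau(z),
\]
which I would abbreviate as $\widehat{\phi}_{lm}$. Dividing by $\|z^{l}\|_{\lambda}\|z^{m}\|_{\lambda}$ and using the explicit values $\|z^{m}\|_{\lambda}^{2}=m!\,\Gamma(\lambda)/\Gamma(\lambda+|m|)$ from Theorem \ref{bergmansobolev.thm}, this determines the matrix of $T_{\phi}$ in the orthonormal basis $e_{m}=z^{m}/\|z^{m}\|_{\lambda}$.

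The core computation is then
\[
\|T_{\phi}\|_{HS}^{2}=\sum_{l,m}\frac{|\widehat{\phi}_{lm}|^{2}}{\|z^{l}\|_{\lambda}^{2}\,\|z^{m}\|_{\lambda}^{2}},
\]
and I would expand each factor $|\widehat{\phi}_{lm}|^{2}=\widehat{\phi}_{lm}\overline{\widehat{\phi}_{lm}}$ as a double integral over $\mathbb{B}^{d}\times\mathbb{B}^{d}$ in variables $z,w$, so that the whole quantity becomes
\[
c_{\lambda}^{2}\int_{\mathbb{B}^{d}}\int_{\mathbb{B}^{d}} \phi(z)\overline{\phi(w)}\,(1-|z|^{2})^{\lambda}(1-|w|^{2})^{\lambda}\left(\sum_{l,m}\frac{\overline{z^{l}}z^{m}\,w^{l}\overline{w^{m}}}{\|z^{l}\|_{\lambda}^{2}\|z^{m}\|_{\lambda}^{2}}\right)d\tau(z)\,d\tau(w).
\]
The inner sum factors as $\bigl(\sum_{l}\overline{z^{l}}w^{l}/\|z^{l}\|_{\lambda}^{2}\bigr)\bigl(\sum_{m}z^{m}\overline{w^{m}}/\|z^{m}\|_{\lambda}^{2}\bigr)$, and each of these is exactly the reproducing-kernel expansion: $\sum_{m}z^{m}\overline{w^{m}}/\|z^{m}\|_{\lambda}^{2}=K_{\lambda}(z,w)=(1-z\cdot\bar{w})^{-\lambda}$ by Theorem \ref{bergmansobolev.thm}. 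Hence the inner sum equals $[(1-\bar z\cdot w)(1-z\cdot\bar w)]^{-\lambda}$, and substituting back recovers precisely
\[
\|T_{\phi}\|_{HS}^{2}=c_{\lambda}^{2}\int_{\mathbb{B}^{d}}\int_{\mathbb{B}^{d}}\phi(z)\overline{\phi(w)}\left[\frac{(1-|z|^{2})(1-|w|^{2})}{(1-z\cdot\bar w)(1-\bar z\cdot w)}\right]^{\lambda}d\tau(w)\,d\tau(z)=\langle \phi, A_{\lambda}\phi\rangle_{L^{2}(\mathbb{B}^{d},\tau)}.
\]
By Theorem \ref{alambda.thm} the operator $A_{\lambda}$ is bounded on $L^{2}(\mathbb{B}^{d},\tau)$, so the right-hand side is finite; this shows the Hilbert--Schmidt sum converges, hence $T_{\phi}$ is a well-defined Hilbert--Schmidt operator with the asserted norm. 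Uniqueness is immediate since a Hilbert--Schmidt (in particular bounded) operator is determined by its matrix entries against the monomials, which are fixed by (\ref{hsmatrix2}).

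The main obstacle is the interchange of summation and integration needed to pass from $\sum_{l,m}|\widehat{\phi}_{lm}|^{2}/(\cdots)$ to the double integral with the kernel. I would handle this by first truncating the sum to $|l|,|m|\le M$ (giving a positive, manifestly convergent finite sum whose value is a double integral against the truncated kernel), and then letting $M\to\infty$: the truncated kernels increase monotonically to $[(1-|z|^{2})(1-|w|^{2})/((1-z\cdot\bar w)(1-\bar z\cdot w))]^{\lambda}$ after multiplying by the nonnegative weight, so monotone convergence applies once one checks positivity of the integrand — which holds because the partial sums of $K_{\lambda}$ are themselves positive-definite kernels, making $|K^{(M)}_{\lambda}(z,w)|^{2}\ge 0$ with $\phi(z)\overline{\phi(w)}$ absorbed by writing the truncated sum as $\|\sum_{|m|\le M} a_{m} z^{m}\|^{2}$-type expression. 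An alternative, cleaner route is to establish the norm identity first for $\phi$ a polynomial in $z,\bar z$ (where all sums are finite and Fubini is trivial), observe the identity $\|T_{\phi}\|_{HS}^{2}=\langle\phi,A_{\lambda}\phi\rangle$ on that dense subspace of $L^{2}(\mathbb{B}^{d},\tau)$, and then extend by continuity using boundedness of $A_{\lambda}$ together with the fact that $\phi\mapsto T_{\phi}$ (defined via matrix entries) is an isometry from $(L^{2}(\mathbb{B}^{d},\tau),\langle\cdot,A_{\lambda}\cdot\rangle^{1/2})$ into the Hilbert--Schmidt class; I would likely present this second route as the main argument and relegate the monotone-convergence bookkeeping to a remark.
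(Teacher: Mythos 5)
Your overall skeleton is the paper's: compute the matrix entries of $T_{\phi}$ in the monomial basis, sum $\textstyle\sum_{l,m}|a_{lm}|^{2}$ by expanding as a double integral, use the series $\sum_{l}\frac{\Gamma(\lambda+|l|)}{l!\Gamma(\lambda)}\bar z^{l}w^{l}=(1-\bar z\cdot w)^{-\lambda}$ to recognize $\langle\phi,A_{\lambda}\phi\rangle$, and invoke Theorem \ref{alambda.thm} for finiteness. That central computation is correct and identical to the paper. The problem is the step you yourself flag as the main obstacle, and both of your proposed justifications fail. The ``cleaner'' route via polynomial symbols is genuinely broken: a nonzero polynomial in $z,\bar z$ is \emph{not} in $L^{2}(\mathbb{B}^{d},\tau)$, because $\tau$ is an infinite measure and polynomials do not decay at the boundary (indeed $\int_{\mathbb{B}^{d}}(1-|z|^{2})^{-(d+1)}dz=\infty$). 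So polynomials are not a dense subspace of $L^{2}(\mathbb{B}^{d},\tau)$ on which to verify the identity, and for such $\phi$ the operator $T_{\phi}$ is typically not Hilbert--Schmidt anyway ($\phi\equiv 1$ gives the identity operator when $\lambda>d$). The monotone-convergence route also does not work as written: truncating to $|l|,|m|\le M$ produces the integrand $\phi(z)\overline{\phi(w)}\,|K^{(M)}(z,w)|^{2}(1-|z|^{2})^{\lambda}(1-|w|^{2})^{\lambda}$, and while $|K^{(M)}|^{2}\ge 0$ and the truncated \emph{sum} is increasing in $M$, the pointwise integrand is not nonnegative because of the factor $\phi(z)\overline{\phi(w)}$; your remark about ``absorbing'' this factor into a square does not produce a pointwise-nonnegative integrand on $\mathbb{B}^{d}\times\mathbb{B}^{d}$, which is what monotone convergence requires. (It could be repaired by first treating $\phi\ge 0$ and then polarizing, but that is not what you wrote.)

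The fix the paper uses is simple and worth adopting: first take $\phi$ continuous and supported in a closed ball of radius $r<1$. On that support the series (\ref{kernelSeries}) converges absolutely and uniformly, $\phi$ is bounded, and the ball has finite $\tau$-measure, so Fubini applies with no subtlety and yields $\sum_{l,m}|a_{lm}|^{2}=\langle\phi,A_{\lambda}\phi\rangle_{L^{2}(\mathbb{B}^{d},\tau)}$ for such $\phi$. These $\phi$ \emph{are} dense in $L^{2}(\mathbb{B}^{d},\tau)$; for general $\phi$ choose nice $\phi_{j}\to\phi$, note that boundedness of $A_{\lambda}$ makes $T_{\phi_{j}}$ Cauchy in Hilbert--Schmidt norm, and check that the limiting operator has matrix entries (\ref{alm}) (this uses that $e_{l}e_{m}$ is bounded and $(1-|z|^{2})^{\lambda}\in L^{2}(\mathbb{B}^{d},\tau)$ for $\lambda>d/2$, so $\phi\mapsto a_{lm}$ is continuous on $L^{2}(\mathbb{B}^{d},\tau)$). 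Finally, you should also say a word about passing from monomials to arbitrary bounded holomorphic $f,g$ in (\ref{hsmatrix2}), which again is a polynomial-approximation argument on the $f,g$ side.
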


If $\lambda>d$ and $\phi\in L^{2}(\mathbb{B}^{d},\tau)\cap L^{\infty
}(\mathbb{B}^{d},\tau)$, then the definition of $T_{\phi}$ in Theorem
\ref{hsl2.thm} agrees with the \textquotedblleft multiply and
project\textquotedblright\ definition; compare (\ref{matrix.entry}).

Applying Lemma \ref{2lambda.lem} with $\lambda_{1}=\lambda_{2}=\lambda$ and
$\lambda>d/2,$ we see that for all $f$ and $g$ in $H(\mathbb{B}^{d},\lambda),$
the function $z\rightarrow\overline{f(z)}g(z)(1-\left\vert z\right\vert
^{2})^{\lambda}$ is in $L^{2}(\mathbb{B}^{d},\tau).$ This means that the
integral on the right-hand side of (\ref{hsmatrix2}) is absolutely convergent
for all $f,g\in H(\mathbb{B}^{d},\lambda).$ It is then not hard to show that
(\ref{hsmatrix2}) holds for all $f,g\in H(\mathbb{B}^{d},\lambda).$

The operator $A_{\lambda}$ coincides, up to a constant, with the Berezin
transform. Let $\chi_{z}^{\lambda}(w):=K_{\lambda}(z,w)$ be the coherent state
at the point $z,$ which satisfies $f(z)=\left\langle \chi_{z}^{\lambda
},f\right\rangle _{\lambda}$ for all $f\in H(\mathbb{B}^{d},\lambda).$ Then
one standard definition of the Berezin transform $B_{\lambda}$ is%
\[
B_{\lambda}\phi=\frac{\left\langle \chi_{z}^{\lambda},T_{\phi}\chi
_{z}^{\lambda}\right\rangle _{\lambda}}{\left\langle \chi_{z}^{\lambda}%
,\chi_{z}^{\lambda}\right\rangle _{\lambda}}.
\]
The function $B_{\lambda}\phi$ may be thought of as the Wick-ordered symbol of
$T_{\phi},$ where $T_{\phi}$ is thought of as the \textit{anti}-Wick-ordered
quantization of $\phi.$ Using the formula (Theorem \ref{bergmansobolev.thm})
for the reproducing kernel along with (\ref{hsmatrix2}), we see that
$A_{\lambda}=c_{\lambda}B_{\lambda}.$ (Note that $\chi_{z}^{\lambda}(w)$ is a
bounded function of $w$ for each fixed $z\in\mathbb{B}^{d}$ and that
$\left\langle \chi_{z}^{\lambda},\chi_{z}^{\lambda}\right\rangle _{\lambda
}=K_{\lambda}(z,z).$)

Note that $\tau$ is an infinite measure, which means that if $\phi$ is in
$L^{2}(\mathbb{B}^{d},\tau)$ or $L^{1}(\mathbb{B}^{d},\tau),$ then $\phi$ must
tend to zero at the boundary of $\mathbb{B}^{d},$ at least in an average
sense. This decay of $\phi$ is what allows (\ref{hsmatrix2}) to be a
convergent integral. If, for example, we want to take $\phi(z)\equiv1,$ then
we cannot use (\ref{hsmatrix2}) to define $T_{\phi},$ but must instead use the
definition from Section \ref{poly.sec} or Section \ref{bddop}.

Note also that the space of Hilbert--Schmidt operators on $H(\mathbb{B}%
^{d},\lambda)$ may be viewed as the quantum counterpart of $L^{2}%
(\mathbb{B}^{d},\tau).$ It is thus natural to investigate the question of when
the Berezin--Toeplitz quantization maps $L^{2}(\mathbb{B}^{d},\tau)$ into the
Hilbert--Schmidt operators.

We now show that if one considers a symbol $\phi$ in $L^{1}(\mathbb{B}%
^{d},\tau),$ then one obtains a Hilbert--Schmidt Toeplitz operator $T_{\phi}$
for all $\lambda>0.$

\begin{theorem}
\label{hsl1.thm}Fix $\lambda>0$ and let $c_{\lambda}$ be as in Theorem
\ref{hsl2.thm}. Then for each $\phi\in L^{1}(\mathbb{B}^{d},\tau),$ there
exists a unique Hilbert--Schmidt operator on $H(\mathbb{B}^{d},\lambda),$
denoted $T_{\phi},$ with the property that%
\begin{equation}
\left\langle f,T_{\phi}g\right\rangle _{\lambda}=c_{\lambda}\int
_{\mathbb{B}^{d}}\overline{f(z)}\phi(z)g(z)(1-|z|^{2})^{\lambda}~d\tau(z)
\label{hsmatrix3}%
\end{equation}
for all bounded holomorphic functions $f$ and $g$ in $H(\mathbb{B}^{d}%
,\lambda).$ The Hilbert--Schmidt norm of $T_{\phi}$ satisfies%
\[
\left\Vert T_{\phi}\right\Vert _{HS}\leq c_{\lambda}\left\Vert \phi\right\Vert
_{L^{1}(\mathbb{B}^{d},\tau)}.
\]

\end{theorem}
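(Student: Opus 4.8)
The plan is to construct $T_\phi$ directly as a norm-convergent Bochner integral of rank-one operators built from the coherent states, and to read off the Hilbert--Schmidt bound from the triangle inequality for such integrals. Write $\chi_w^\lambda(z) := K_\lambda(z,w) = (1-z\cdot\bar w)^{-\lambda}$, the coherent state at $w$, so that by Theorem \ref{bergmansobolev.thm} we have $g(w) = \langle \chi_w^\lambda, g\rangle_\lambda$ and $\overline{f(w)} = \langle f, \chi_w^\lambda\rangle_\lambda$ for all $f, g \in H(\mathbb{B}^d, \lambda)$, together with $\|\chi_w^\lambda\|_\lambda^2 = K_\lambda(w,w) = (1-|w|^2)^{-\lambda}$. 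For $w \in \mathbb{B}^d$ let $R_w$ be the rank-one operator on $H(\mathbb{B}^d,\lambda)$ defined by $R_w h = \langle \chi_w^\lambda, h\rangle_\lambda\, \chi_w^\lambda$. Then $\langle f, R_w g\rangle_\lambda = \overline{f(w)}\, g(w)$ and $\|R_w\|_{HS} = \|\chi_w^\lambda\|_\lambda^2 = (1-|w|^2)^{-\lambda}$. The candidate operator is
\[
T_\phi := c_\lambda \int_{\mathbb{B}^d} \phi(w)\,(1-|w|^2)^\lambda\, R_w\; d\tau(w),
\]
interpreted as a Bochner integral in the Hilbert space of Hilbert--Schmidt operators on $H(\mathbb{B}^d,\lambda)$. (When $c_\lambda = 0$ this is the zero operator and every assertion is trivial, so assume $c_\lambda \neq 0$.)

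First I would establish Bochner integrability of the integrand $w \mapsto \phi(w)(1-|w|^2)^\lambda R_w$. Using the explicit formula for $K_\lambda$ one checks that $w \mapsto \chi_w^\lambda$ is continuous from $\mathbb{B}^d$ into $H(\mathbb{B}^d,\lambda)$ (for instance $\|\chi_w^\lambda - \chi_{w'}^\lambda\|_\lambda^2$ is a sum of values of the jointly continuous kernel $K_\lambda$), hence $w \mapsto R_w$ is continuous into the Hilbert--Schmidt space; multiplying by the measurable scalar function $\phi(w)(1-|w|^2)^\lambda$ yields a strongly measurable Hilbert--Schmidt-valued function. Its Hilbert--Schmidt norm is $|\phi(w)|\,(1-|w|^2)^\lambda(1-|w|^2)^{-\lambda} = |\phi(w)|$, which is in $L^1(\mathbb{B}^d,\tau)$ by hypothesis. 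Therefore the integrand is Bochner integrable, $T_\phi$ is a genuine Hilbert--Schmidt operator, and the triangle inequality for Bochner integrals gives $\|T_\phi\|_{HS} \le c_\lambda \int_{\mathbb{B}^d}|\phi(w)|\,d\tau(w) = c_\lambda \|\phi\|_{L^1(\mathbb{B}^d,\tau)}$.

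Next I would verify the matrix-entry identity. For fixed bounded holomorphic $f, g \in H(\mathbb{B}^d,\lambda)$, the map $S \mapsto \langle f, Sg\rangle_\lambda$ is a bounded linear functional on the Hilbert--Schmidt operators (since $|\langle f, Sg\rangle_\lambda| \le \|f\|_\lambda\|g\|_\lambda\|S\|_{HS}$), so it commutes with the Bochner integral. Applying it and using $\langle f, R_w g\rangle_\lambda = \overline{f(w)}g(w)$ yields exactly
\[
\langle f, T_\phi g\rangle_\lambda = c_\lambda \int_{\mathbb{B}^d} \overline{f(w)}\,\phi(w)\,g(w)\,(1-|w|^2)^\lambda\; d\tau(w),
\]
which is (\ref{hsmatrix3}); the right-hand integral is absolutely convergent because $(1-|w|^2)^\lambda \le 1$ and $f, g$ are bounded, so the integrand is dominated by $\|f\|_\infty\|g\|_\infty|\phi(w)|$. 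For uniqueness, the monomials $z^m$ are bounded holomorphic functions forming an orthogonal basis of $H(\mathbb{B}^d,\lambda)$ by Theorem \ref{bergmansobolev.thm}; a bounded operator is determined by the numbers $\langle z^l, Sz^m\rangle_\lambda$, so any two Hilbert--Schmidt operators satisfying (\ref{hsmatrix3}) for all bounded holomorphic $f, g$ must coincide.

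The only genuinely delicate point is the soft-analysis bookkeeping for the Bochner integral: verifying strong measurability of $w \mapsto R_w$ and justifying passage of the evaluation functional through the integral. Everything else is a direct consequence of the reproducing kernel formula of Theorem \ref{bergmansobolev.thm} --- in particular the exact cancellation of $(1-|w|^2)^{\pm\lambda}$ that makes the integrand norm equal to $|\phi(w)|$, which is precisely what forces an $L^1$-hypothesis (rather than $L^2$) and produces the stated bound. One could alternatively expand $T_\phi$ in the orthonormal basis of monomials and estimate the resulting double sum, but that is more cumbersome and naturally yields an equality rather than the inequality asserted here, so the coherent-state/Bochner construction is the more efficient route.
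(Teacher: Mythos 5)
Your proof is correct, but it takes a genuinely different route from the paper's. The paper works in the orthonormal basis of normalized monomials: for ``nice'' (continuous, compactly supported) $\phi$ it computes $\sum_{l,m}|a_{lm}|^{2}$ via Fubini and the negative binomial series, obtaining the exact identity $\Vert T_{\phi}\Vert_{HS}^{2}=\langle\phi,A_{\lambda}\phi\rangle_{L^{2}(\mathbb{B}^{d},\tau)}$, then bounds this by $c_{\lambda}^{2}\Vert\phi\Vert_{L^{1}}^{2}$ using the pointwise bound $|F_{\lambda}(z,w)|\leq c_{\lambda}^{2}$ on the kernel of $A_{\lambda}$, and finally passes to general $\phi\in L^{1}(\mathbb{B}^{d},\tau)$ by approximation. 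You instead realize $T_{\phi}$ directly as a Bochner integral of the rank-one coherent-state operators $R_{w}$, exploiting the exact cancellation of $\Vert R_{w}\Vert_{HS}=(1-|w|^{2})^{-\lambda}$ against the weight $(1-|w|^{2})^{\lambda}$, so that the integrand has Hilbert--Schmidt norm $|\phi(w)|$; the triangle inequality for Bochner integrals then delivers the $L^{1}$ bound in one stroke, and the matrix-entry identity and uniqueness follow by applying the bounded functional $S\mapsto\langle f,Sg\rangle_{\lambda}$ under the integral and testing on monomials. Your route dispenses with both the Fubini/series computation and the approximation step, and makes structurally transparent why $L^{1}(\tau)$ is the natural hypothesis; what it does not produce is the exact Hilbert--Schmidt norm formula in terms of the Berezin-transform operator $A_{\lambda}$, which the paper's method yields as a byproduct and which unifies this theorem with the $L^{2}$ case (Theorem \ref{hsl2.thm}). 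One cosmetic point, present in the paper's statement as well: since $c_{\lambda}$ can be negative for nonintegral $\lambda<d$, the constant in the final bound should strictly be $|c_{\lambda}|$, which is what both arguments actually prove.
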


Using the pointwise bounds on elements of $H(\mathbb{B}^{d},\lambda)$ coming
from the reproducing kernel, we see immediately that for all $f,g\in
H(\mathbb{B}^{d},\lambda),$ the function $z\rightarrow\overline{f(z)}%
g(z)(1-|z|^{2})^{\lambda}$ is bounded. It is then not hard to show that
(\ref{hsmatrix3}) holds for all $f,g\in H(\mathbb{B}^{d},\lambda).$

We have already remarked that the definition of $T_{\phi}$ given in this
section agrees with the \textquotedblleft multiply and
project\textquotedblright\ definition when $\lambda>d$ (and $\phi$ is
bounded). It is also easy to see that the definition of $T_{\phi}$ given in
this section agrees with the one in Section \ref{bddop}, when $\phi$ falls
under the hypotheses of both Definition \ref{sobolevtoep.def} and either
Theorem \ref{hsl2.thm} or Theorem \ref{hsl1.thm}. For some positive integer
$n,$ consider the set of $\lambda$'s for which $\lambda+2n>d$ and
$\lambda>d/2,$ i.e., $\lambda>\max(d-2n,d/2).$ Now suppose that $\phi$ belongs
to $L^{2}(\mathbb{B}^{d},\tau)$ and that $N^{k}\bar{N}^{l}\phi$ is bounded for
all $0\leq k,l\leq n.$ It is easy to see that the matrix entries $\left\langle
f,T_{\phi}g\right\rangle _{\lambda}$ depend real-analytically on $\lambda$ for
fixed polynomials $f$ and $g,$ whether $T_{\phi}$ is defined by Definition
\ref{sobolevtoep.def} or by Theorem \ref{hsl2.thm}. For $\lambda>d,$ the two
matrix entries agree because both definitions of $T_{\phi}$ agree with the
\textquotedblleft multiply and project\textquotedblright\ definition. The
matrix entries therefore must agree for all $\lambda>\max(d-2n,d/2).$ Since
polynomials are dense in $H(\mathbb{B}^{d},\lambda)$ and both definitions of
$T_{\phi}$ give bounded operators, the two definitions of $T_{\phi}$ agree.
The same reasoning shows agreement of Definition \ref{sobolevtoep.def} and
Theorem \ref{hsl1.thm}.

\subsection{Discussion}

Before proceeding on with the proof, let us make a few remarks about the way
we are defining Toeplitz operators in this section. For $\lambda>d,$
$c_{\lambda}$ is the normalization constant that makes the measure
$\mu_{\lambda}$ a probability measure, which can be computed to have the value
$\Gamma(\lambda)/(\pi^{d}\Gamma(\lambda-d)).$ For $\lambda\leq d,$ although
the measure $(1-|z|^{2})^{\lambda}~d\tau(z)$ is an infinite measure, we simply
use the same formula for $c_{\lambda}$ in terms of the gamma function. We
understand this to mean that $c_{\lambda}=0$ whenever $\lambda$ is an integer
in the range $(0,d].$ It also means that $c_{\lambda}$ is negative when
$d-1<\lambda<d$ and when $d-3<\lambda<d-2,$ etc.

In the cases where $c_{\lambda}=0$, we have that $T_{\phi}=0$ for all $\phi$
in $L^{1}(\mathbb{B}^{d},\tau)$ or $L^{2}(\mathbb{B}^{d},\tau).$ This first
occurs when $\lambda=d.$ Recall that for $\lambda=d,$ the space $H(\mathbb{B}%
^{d},\lambda)$ can be identified with the Hardy space of holomorphic functions
square-integrable over the boundary. Meanwhile, having $\phi$ being integrable
or square-integrable with respect to $\tau$ means that $\phi$ tends to zero
(in an average sense) at the boundary, in which case it is reasonable that
$T_{\phi}$ should be zero as an operator on the Hardy space. For other integer
values of $\lambda\leq d,$ the inner product on $H(\mathbb{B}^{d},\lambda)$
can be expressed using the methods of Section \ref{bergmansobolev.sec} in
terms of integration over the boundary, but involving the functions and their
derivatives. In that case, we expect $T_{\phi}$ to be zero if $\phi$ has
sufficiently rapid decay at the boundary, and it is reasonable to think that
having $\phi$ in $L^{1}$ or $L^{2}$ with respect to $\tau$ constitutes
sufficiently rapid decay. Note, however, that the conclusion that $T_{\phi}=0$
when $c_{\lambda}=0$ applies \textit{only} when $\phi$ is in $L^{1}$ or
$L^{2}$; for other classes of symbols, such as polynomials, $T_{\phi}$ is not
necessarily zero. For example, $T_{z^{m}}$ is equal to $M_{z^{m}},$ which is
certainly a nonzero operator on $H(\mathbb{B}^{d},\lambda),$ for all
$\lambda>0.$

Meanwhile, if $c_{\lambda}<0,$ then we have the curious situation that if
$\phi$ is positive and in $L^{1}$ or $L^{2}$ with respect to $\tau,$ then the
operator $T_{\phi}$ is actually a \textit{negative} operator. This is merely a
dramatic example of a phenomenon we have already noted: for $\lambda<d,$
non-negative symbols do not necessarily give rise to non-negative Toeplitz
operators. Again, though, the conclusion that $T_{\phi}$ is negative for
$\phi$ positive applies only when $\phi$ belongs to $L^{1}$ or $L^{2}.$ For
example, the constant function $\mathbf{1}$ always maps to the (positive!)
identity operator, regardless of the value of $\lambda.$

\subsection{Proofs}

As motivation, we begin by computing the Hilbert--Schmidt norm of Toeplitz
operators in the case $\lambda>d.$ For any bounded measurable $\phi,$ we
extend the Toeplitz operator $T_{\phi}$ to all of $L^{2}(\mathbb{B}^{d}%
,\mu_{\lambda})$ by making it zero on the orthogonal complement of the
holomorphic subspace. This extension is given by the formula $P_{\lambda
}M_{\phi}P_{\lambda}.$ Then the Hilbert--Schmidt norm of the operator
$T_{\phi}$ on $\mathcal{H}L^{2}(\mathbb{B}^{d},\mu_{\lambda})$ is the same as
the Hilbert--Schmidt norm of the operator $P_{\lambda}M_{\phi}P_{\lambda}$ on
$L^{2}(\mathbb{B}^{d},\mu_{\lambda}).$ Since $P_{\lambda}$ is computed as
integration against the reproducing kernel, we may compute that%
\[
P_{\lambda}M_{\phi}P_{\lambda}f(z)=\int_{\mathbb{B}^{d}}\mathcal{K}_{\phi
}(z,w)f(w)\,d\mu_{\lambda}(w),
\]
where%
\[
\mathcal{K}_{\phi}(z,w)=\int_{\mathbb{B}^{d}}K(z,u)\phi(u)K(u,w)\,d\mu
_{\lambda}(u).
\]

If we can show that $\mathcal{K}_{\phi}$ is in $L^{2}(\mathbb{B}^{d}%
\times\mathbb{B}^{d},\mu_{\lambda}\times\mu_{\lambda})$, then it will follow
by a standard result that $T_{\phi}$ is Hilbert--Schmidt, with
Hilbert--Schmidt norm equal to the $L^{2}$ norm of $\mathcal{K}_{\phi}.$ For
sufficiently nice $\phi,$ we can compute the $L^{2}$ norm of $\mathcal{K}%
_{\phi}$ by rearranging the order of integration and using twice the
reproducing identity $\int K(z,w)K(w,u)~d\mu_{\lambda}(w)=K(z,u).$ (This
identity reflects that $P_{\lambda}^{2}=P_{\lambda}.$) This yields%
\[
\int_{\mathbb{B}^{d}\times\mathbb{B}^{d}}\left\vert \mathcal{K}_{\phi
}(z,w)\right\vert ^{2}~d\mu_{\lambda}(z)~d\mu_{\lambda}=\left\langle
\phi,A\phi\right\rangle _{L^{2}(\mathbb{B}^{d},\tau)},
\]
where $A_{\lambda}$ is the integral operator given by%
\begin{align}
A_{\lambda}\phi(z)  &  =c_{\lambda}^{2}\int_{\mathbb{B}^{d}}\left\vert
K(z,w)\right\vert ^{2}(1-|z|^{2})^{\lambda}(1-|w|^{2})^{\lambda}\phi
(w)~d\tau(w)\nonumber\\
&  =c_{\lambda}^{2}\int_{\mathbb{B}^{d}}\left[  \frac{(1-|z|^{2})(1-|w|^{2}%
)}{(1-\bar{w}\cdot z)(1-\bar{z}\cdot w)}\right]  ^{\lambda}\phi(w)\,d\tau(w).
\label{alambda.form1}%
\end{align}

In the case $d/2<\lambda\leq d,$ it no longer makes sense to express $T_{\phi
}$ as $P_{\lambda}M_{\phi}P_{\lambda}.$ Nevertheless, we can consider an
operator $A_{\lambda}$ defined by (\ref{alambda.form1}). Our goal is to show
that for all $\lambda>d/2,$ (1) $A_{\lambda}$ is a bounded operator on
$L^{2}(\mathbb{B}^{d},\tau)$ and (2) if we define $T_{\phi}$ by
(\ref{hsmatrix}), then the Hilbert--Schmidt norm of $T_{\phi}$ is given by
$\left\langle \phi,A_{\lambda}\phi\right\rangle _{L^{2}(\mathbb{B}^{d},\tau
)}.$ We will obtain similar results for all $\lambda>0$ if $\phi\in
L^{1}(\mathbb{B}^{d},\tau).$

\begin{proof}[Proof of Theorem \ref{alambda.thm}]
We give two proofs of this result; the first
generalizes more easily to other bounded symmetric domains, whereas the second
relates $A_{\lambda}$ to the Laplacian for $\mathbb{B}^{d}$ (compare \cite{E}).

\textit{First Proof.} We let
\[
F_{\lambda}(z,w)=c_{\lambda}^{2}\left[  \frac{(1-|z|^{2})(1-|w|^{2})}%
{(1-\bar{w}\cdot z)(1-\bar{z}\cdot w)}\right]  ^{\lambda}\text{;}%
\]
i.e., $F_{\lambda}$ is the integral kernel of the operator $A_{\lambda}.$ A
key property of $F_{\lambda}$ is its invariance under automorphisms:
$F_{\lambda}(\psi(z),\psi(w))=F_{\lambda}(z,w)$ for each automorphism
(biholomorphism) $\psi$ of $\mathbb{B}^{d}$ and all $z,w\in\mathbb{B}^{d}.$ To
establish the invariance of $F_{\lambda},$ let
\begin{equation}
f_{\lambda}(z)=c_{\lambda}^{2}(1-|z|^{2})^{\lambda}. \label{flambda.def}%
\end{equation}
According to Lemma 1.2 of \cite{Z}, $F_{\lambda}(z,w)=f_{\lambda}(\phi
_{w}(z)),$ where $\phi_{w}$ is an automorphism of $\mathbb{B}^{d}$ taking $0$
to $w$ and satisfying $\phi_{w}^{2}=I.$ Now, if $\psi$ is any automorphism,
the classification of automorphisms (Theorem 1.4 of \cite{Z}) implies that
$\psi\circ\phi_{w}=\phi_{\psi(w)}\circ U$ for some unitary matrix $U.$ From
this we can obtain $\phi_{\psi(w)}=U\circ\phi_{w}\circ\psi^{-1},$ and so%
\[
f_{\lambda}(\phi_{\psi(w)}(\psi(z)))=f_{\lambda}(U(\phi_{w}(\psi^{-1}%
(\psi(z))))=f_{\lambda}(\phi_{w}(z)),
\]
i.e., $F_{\lambda}(\psi(z),\psi(w))=F_{\lambda}(z,w).$

The invariance of $F_{\lambda}$ under automorphisms means that $A_{\lambda
}\phi$ can be thought of as a convolution (over the automorphism group
$PSU(d,1)$) of $\phi$ with the function $f_{\lambda}.$ What this means is that%
\[
A_{\lambda}\phi(z)=\int_{G}f_{\lambda}(gh^{-1}\cdot0)\phi(h\cdot0)~dh,
\]
where $g\in G$ is chosen so that $g\cdot0=z.$ Here $G=PSU(d,1)$ is the group
of automorphisms of $\mathbb{B}^{d}$ (given by fractional linear
transformations) and $dh$ is an appropriately normalized Haar measure on $G.$
Furthermore, $L^{2}(\mathbb{B}^{d},\tau)$ can be identified with the
right-$K$-invariant subspace of $L^{2}(G,dg)$, where $K:=U(d)$ is the
stabilizer of $0.$

If $\lambda>d,$ then $f_{\lambda}$ is in $L^{1}(\mathbb{B}^{d},\tau),$ in
which case it is easy to prove that $A_{\lambda}$ is bounded; see, for
example, Theorem 2.4 in \cite{B1}. This argument does not work if $\lambda\leq
d.$ Nevertheless, if $\lambda>d/2,$ an easy computation shows that
$f_{\lambda}$ belongs to $L^{2}(\mathbb{B}^{d},\tau)$ and also to
$L^{p}(\mathbb{B}^{d},\tau)$ for some $p<2.$ We could at this point appeal to
a general result known as the Kunze--Stein phenomenon \cite{KS}. The result
states that on connected semisimple Lie groups $G$ with finite center
(including $PSU(d,1)$), convolution with a function in $L^{p}(G,dg),$ $p<2,$
is a bounded operator from $L^{2}(G,dg)$ to itself. (See \cite{Cow} for a
proof in this generality.) However, the proof of this result is simpler in the
case we are considering, where the function in $L^{p}(G,dg)$ is bi-$K$%
-invariant and the other function is right-$K$-invariant. (In our case, the
function in $L^{p}(G,dg)$ is the function $g\rightarrow f_{\lambda}(g\cdot0)$
and the function in $L^{2}(G,dg)$ is $g\rightarrow\phi(g\cdot0).$) Using the
Helgason Fourier transform along with its behavior under convolution with a
bi-$K$-invariant function (\cite[Lemma III.1.4]{He3}), we need only show that
the spherical Fourier transform of $f_{\lambda}$ is bounded. (Helgason proves
Lemma III.1.4 under the assumption that the functions are continuous and of
compact support, but the proof also applies more generally.) Meanwhile,
standard estimates show that for every $\varepsilon>0,$ the spherical
functions are in $L^{2+\varepsilon}(G/K),$ with $L^{2+\varepsilon}(G/K)$ norm
bounded independent of the spherical function. (Specifically, in the notation
of \cite[Sect. IV.4]{He2}, for all $\lambda\in\mathfrak{a}^{\ast},$ we have
$\left\vert \phi_{\lambda}(g)\right\vert \leq\phi_{0}(g),$ and estimates on
$\phi_{0}$ (e.g., \cite[Prop. 2.2.12]{AJ}) show that $\phi_{0}$ is in
$L^{2+\varepsilon}$ for all $\varepsilon>0.$)

Choosing $\varepsilon$ so that $1/p+1/(2+\varepsilon)=1$ establishes the
desired boundedness.

\textit{Second proof. }If $c_{\lambda}=0$ (i.e., if $\lambda\in\mathbb{Z}$ and
$\lambda\leq d$), then there is nothing to prove. Thus we assume $c_{\lambda}$
is nonzero, in which case $c_{\lambda+1}$ is also nonzero. The invariance of
$F_{\lambda}$ under automorphisms together with the square-integrability of
the function $(1-|z|^{2})^{\lambda}$ for $\lambda>d/2$ show that the integral
defining $A_{\lambda}f(z)$ is absolutely convergent for all $z.$

We introduce the (hyperbolic) Laplacian $\Delta$ for $\mathbb{B}^{d},$ given
by%
\begin{equation}
\Delta=(1-|z|^{2})\sum_{j,k=1}^{d}(\delta_{jk}-\bar{z}_{j}z_{k})\frac
{\partial^{2}}{\partial\bar{z}_{j}\partial z_{k}}. \label{laplace}%
\end{equation}
(This is a \textit{negative} operator.) This operator commutes with the
automorphisms of $\mathbb{B}^{d}.$ It is known (e.g., \cite{Str}) that
$\Delta$ is an unbounded self-adjoint operator on $L^{2}(\mathbb{B}^{d}%
,\tau),$ on the domain consisting of those $f$'s in $L^{2}(\mathbb{B}^{d}%
,\tau)$ for which $\Delta f$ in the distribution sense belongs to
$L^{2}(\mathbb{B}^{d},\tau).$ In particular, if $f\in L^{2}(\mathbb{B}%
^{d},\tau)$ is $C^{2}$ and $\Delta f$ in the ordinary sense belongs to
$L^{2}(\mathbb{B}^{d},\tau),$ then $f\in Dom(\Delta).$

We now claim that%
\begin{equation}
\Delta_{z}F_{\lambda}(z,w)=\lambda(\lambda-d)(F_{\lambda}(z,w)-F_{\lambda
+1}(z,w)), \label{laplacef}%
\end{equation}
where $\Delta_{z}$ indicates that $\Delta$ is acting on the variable $z$ with
$w$ fixed. Since $\Delta$ commutes with automorphisms, it again suffices to
check this when $w=0,$ in which case it is a straightforward algebraic
calculation. Suppose, then, that $\phi$ is a $C^{\infty}$ function of compact
support. In that case, we are free to differentiate under the integral to
obtain%
\begin{equation}
\Delta A_{\lambda}\phi=\lambda(\lambda-d)A_{\lambda}\phi-\lambda
(\lambda-d)A_{\lambda+1}\phi. \label{laplace.ident1}%
\end{equation}

Now, the invariance of $F_{\lambda}$ tells us that $L^{2}(\mathbb{B}^{d}%
,\tau)$ norm of $F_{\lambda}(z,w)$ as a function of $z$ is finite for all $w$
and independent of $w.$ Putting the $L^{2}$ norm inside the integral then
shows that $A_{\lambda}\phi$ and $A_{\lambda+1}\phi$ are in $L^{2}%
(\mathbb{B}^{d},\tau).$ This shows that $A_{\lambda}\phi$ is in $Dom(\Delta).$
Furthermore, the condition $\lambda>d/2$ implies that $\lambda(\lambda
-d/2)>-d^{2}/4.$ It is known that the $L^{2}$ spectrum of $\Delta$ is
$(-\infty,-d^{2}/4].$ For general symmetric space of the noncompact type, the
$L^{2}$ spectrum of the Laplacian is $(-\infty,-\left\Vert \rho\right\Vert
^{2}],$ where $\rho$ is half the sum of the positive (restricted) roots for
$G/K,$ counted with their multiplicity. In our case, there is one positive
root $\alpha$ with multiplicity $(2d-2)$ and another positive root $2\alpha$
with multiplicity 1. (See the entry for \textquotedblleft A
IV\textquotedblright\ in Table VI of Chapter X of \cite{He1}.) Thus,
$\rho=d\alpha.$ It remains only to check that if the metric is normalized so
that the Laplacian comes out as in (\ref{laplace}), then $\left\Vert
\alpha\right\Vert ^{2}=1/4.$ This is a straightforward but unilluminating
computation, which we omit.

Since $\lambda(\lambda-d)$ is in the resolvent set of $\Delta,$ we may rewrite
(\ref{laplace.ident1}) as%
\[
A_{\lambda}\phi=-\lambda(\lambda-d)[\Delta-\lambda(\lambda-d)I]^{-1}%
A_{\lambda+1}\phi.
\]
Suppose now that $\lambda+1>d,$ so that (as remarked above) $A_{\lambda+1}$ is
bounded. Since $[\Delta-cI]^{-1}$ is a bounded operator for all $c$ in the
resolvent of $\Delta,$ we see that $A_{\lambda}$ has a bounded extension from
$C_{c}^{\infty}(\mathbb{B}^{d})$ to $L^{2}(\mathbb{B}^{d},\tau).$ Since the
integral computing $A_{\lambda}\phi(z)$ is a continuous linear functional on
$L^{2}(\mathbb{B}^{d},\tau)$ (integration against an element of $L^{2}%
(\mathbb{B}^{d},\tau)$), it is easily seen that this bounded extension
coincides with the original definition of $A_{\lambda}.$

The above argument shows that $A_{\lambda}$ is bounded if $\lambda>d/2$ and
$\lambda+1>d.$ Iteration of the argument then shows boundedness for all
$\lambda>d/2.$
\end{proof}

\begin{proof}[Proof of Theorem \ref{hsl2.thm}]
We wish to show that for all $\lambda>d/2,$ if
$\phi$ is in $L^{2}(\mathbb{B}^{d},\tau),$ then there is a unique
Hilbert--Schmidt operator $T_{\phi}$ with matrix entries given in
(\ref{hsmatrix}) for all polynomials, and furthermore, $\left\Vert T_{\phi
}\right\Vert _{HS}^{2}=\left\langle \phi,A_{\lambda}\phi\right\rangle
_{\lambda}.$ At the beginning of this section, we had an calculation of
$\left\Vert T_{\phi}\right\Vert $ in terms of $A_{\lambda},$ but this argument
relied on writing $T_{\phi}$ as $P_{\lambda}M_{\phi}P_{\lambda},$ which does
not make sense for $\lambda\leq d.$

We work with an orthonormal basis for $H(\mathbb{B}^{d},\lambda)$ consisting
of normalized monomials, namely,
\[
e_{m}(z)=z^{m}\sqrt{\frac{\Gamma(\lambda+|m|)}{m!\Gamma(\lambda)}},
\]
for each multi-index $m.$ Then we want to establish the existence of a
Hilbert--Schmidt operator whose matrix entries in this basis are given by%
\begin{equation}
a_{lm}:=c_{\lambda}\int_{\mathbb{B}^{d}}\overline{e_{l}(z)}\phi(z)e_{m}%
(z)(1-|z|^{2})^{\lambda}~d\tau(z). \label{alm}%
\end{equation}
There will exist a unique such operator provided that $\sum_{l,m}|a_{lm}%
|^{2}<\infty.$

If we assume, for the moment, that Fubini's Theorem applies, we obtain%
\begin{align}
&  \sum_{l,m}\left\vert a_{lm}\right\vert ^{2}\nonumber\\
&  =c_{\lambda}^{2}\int_{\mathbb{B}^{d}}\int_{\mathbb{B}^{d}}\sum_{l,m}%
\frac{\Gamma(\lambda+|l|)}{l!\Gamma(\lambda)}\frac{\Gamma(\lambda
+|m|)}{m!\Gamma(\lambda)}\bar{z}^{l}w^{l}z^{m}\bar{w}^{m}\nonumber\\
&  \times\phi(z)\overline{\phi(w)}(1-|z|^{2})^{\lambda}(1-|w|^{2})^{\lambda
}~d\tau(z)~d\tau(w), \label{normsum}%
\end{align}
where $l$ and $m$ range over all multi-indices of length $d.$

We now apply the binomial series
\[
\frac{1}{(1-r)^{\lambda}}=\sum_{k=0}^{\infty}\binom{\lambda+k-1}{k}r^{k}%
\]
for $r\in\mathbb{C}$ with $\left\vert r\right\vert <1,$ where%
\[
\binom{\lambda+k-1}{k}=\frac{\Gamma(\lambda+k)}{k!\Gamma(\lambda)}.
\]
(This is the so-called negative binomial series.) We apply this with
$r=\sum_{j}\bar{z}_{j}w_{j},$ and we then apply the (finite) multinomial
series to the computation of $(\bar{z}\cdot w)^{k}.$ The result is that%
\begin{equation}
\sum_{l}\frac{\Gamma(\lambda+|l|)}{l!\Gamma(\lambda)}\bar{z}^{l}w^{l}=\frac
{1}{(1-\bar{z}\cdot w)^{\lambda}}, \label{kernelSeries}%
\end{equation}
where the sum is over all multi-indices $l.$ Applying this result,
(\ref{normsum}) becomes
\begin{equation}
\sum_{l,m}\left\vert a_{lm}\right\vert ^{2}=\left\langle \phi,A_{\lambda}%
\phi\right\rangle _{\lambda}, \label{normsum2}%
\end{equation}
which is what we want to show.

Assume at first that $\phi$ is \textquotedblleft nice,\textquotedblright\ say,
continuous and supported in a ball of radius $r<1.$ This ball has finite
measure and $\phi$ is bounded on it. Thus, if we put absolute values inside
the sum and integral on the right-hand side of (\ref{normsum}), finiteness of
the result follows from the absolute convergence of the series
(\ref{kernelSeries}). Thus, Fubini's Theorem applies in this case.

Now for a general $\phi\in L^{2}(\mathbb{B}^{d},\tau),$ choose $\phi_{j}$
converging to $\phi$ with $\phi_{j}$ \textquotedblleft nice.\textquotedblright%
\ Then (\ref{normsum2}) tells us that $T_{\phi_{j}}$ is a Cauchy sequence in
the space of Hilbert--Schmidt operators, which therefore converges in the
Hilbert--Schmidt norm to some operator $T.$ The matrix entries of $T_{\phi
_{j}}$ in the basis $\{e_{m}\}$ are by construction given by the integral in
(\ref{alm}). The matrix entries of $T$ are the limit of the matrix entries of
$T_{\phi_{j}},$ hence also given by (\ref{alm}), because $e_{l}$ and $e_{m}$
are bounded and $(1-|z|^{2})^{\lambda}$ belongs to $L^{2}(\mathbb{B}^{d}%
,\tau)$ for $\lambda>d/2.$

We can now establish that (\ref{hsmatrix2}) in Theorem \ref{hsl2.thm} holds
for all bounded holomorphic functions $f$ and $g$ in $H(\mathbb{B}^{d}%
,\lambda)$ by approximating these functions by polynomials.
\end{proof}

\begin{proof}[Proof of Theorem \ref{hsl1.thm}]
In the proof of Theorem \ref{hsl2.thm}, we did not
use the assumption $\lambda>d/2$ until the step in which we approximated
arbitrary functions in $L^{2}(\mathbb{B}^{d},\tau)$ by \textquotedblleft
nice\textquotedblright\ functions. In particular, if $\phi$ is nice, then
(\ref{alm}) makes sense for all $\lambda>0,$ and (\ref{normsum2}) still holds.
Now, since $F_{\lambda}(z,w)=f_{\lambda}(\phi_{w}(z)),$ where $f_{\lambda}$ is
given by (\ref{flambda.def}), we see that $\left\vert F_{\lambda
}(z,w)\right\vert \leq c_{\lambda}^{2}$ for all $z,w\in\mathbb{B}^{d}.$ Thus,
\[
\left\langle \phi,A_{\lambda}\phi\right\rangle _{\lambda}\leq c_{\lambda}%
^{2}\left\Vert \phi\right\Vert _{L^{1}(\mathbb{B}^{d},\tau)}^{2}%
\]
for all nice $\phi.$ An easy approximation argument then establishes the
existence of a Hilbert--Schmidt operator with the desired matrix entries for
all $\phi\in L^{1}(\mathbb{B}^{d},\tau),$ with the desired estimate on the
Hilbert--Schmidt norm.
\end{proof}

\end{document}